\def\la{\langle}\def\ra{\rangle}
\newcommand{\Aut}{{\operatorname{Aut}}}
\newcommand{\Out}{{\operatorname{Out}}}
\newcommand{\GL}{{\operatorname{GL}}}
\newcommand{\SL}{{\operatorname{SL}}}
\def\irr#1{{\rm Irr}(#1)}
\def\cent#1#2{{\bf C}_{#1}(#2)}
\def\syl#1#2{{\rm Syl}_#1(#2)}
\def\fitt#1{{\bf F}(#1)}
\def\z#1{{\bf Z}(#1)}
\def\o#1{\overline{#1}}
\def\gal#1#2{\operatorname{Gal}(#1/#2)}
\def\irrn#1{\operatorname{Irr}_{\mathfrak{n}}(#1)}
\def\irrs#1{\operatorname{Irr}_{\mathfrak{s}}(#1)}
\def\inert#1#2{\operatorname{I}_{#1}(#2)}
\newtheorem{lem}{ \bf Lemma}[section]
\newtheorem{pro}[lem]{\bf Proposition}
\newtheorem{thm}[lem]{\bf Theorem}
\newtheorem*{thm*}{\bf Theorem}
\newtheorem*{con*}{\bf Conjecture}
\newtheorem*{thmA}{\bf Theorem~A}
\newtheorem*{corB}{\bf Corollary~B}
\newtheorem{hy}[lem]{\bf Hypothesis}
\title{A characterization of finite groups by certain Galois conjugacy class of irreducible characters
\thanks{{\bf Acknowledgement:} The authors are supported by Chinese Scholarship Council, and the second author is supported by the NSF of China (No. 11971391, 12071376).
The authors also would like to thank Prof. Dolfi and Prof. Qian for many useful conversations on this topic.}}
 \author{Yu Zeng\footnote{Email: yuzeng2004@163.com}\qquad Dongfang Yang\footnote{Corresponding author. Email: dfyang1228@163.com}\\
  {\footnotesize\small  Dept. Mathematics, Changshu Institute of
  Technology, Changshu, Jiangsu, 215500, China}}
\date{}
\begin{document}
\maketitle

\vskip 1cm

\begin{center}\textbf{Abstract}\end{center}
We classify the finite groups $G$ which satisfies the condition that every complex irreducible character,
whose degree's square doesn't divide the index of its kernel in $G$, lies in the same Galois conjugacy class.

\vskip 5cm

\bigskip

\textbf{Keywords}\,\, Characters, Galois conjugate.

\textbf{2020 MR Subject Classification}\,\, 20C15.
\pagebreak

\section{Introduction}

For a finite group $G$, we define a partition of $\irr{G}$, the \emph{set of all complex irreducible characters} of $G$, to be $\{\irrn{G},\irrs{G}\}$, where
$$\irrn{G}=\{\chi\in\irr{G}:\chi(1)^2\mid |G:\ker\chi|\},~~~~\irrs{G}=\irr{G}-\irrn{G}.$$
To some extent, $|\irrs{G}|$, the size of $\irrs{G}$, measures how far a finite group $G$ to be a nilpotent group.
A classical result in character theory of finite groups states that if $G$ is a finite nilpotent group, then $|\irrs{G}|=0$.
Gagola and Lewis generalized this result by showing that a finite group $G$ is nilpotent if and only if $|\irrs{G}|=0$, see \cite{gagolalewis1999};
Lv, Yang and Qian classified finite groups $G$ with $|\irrs{G}|=1$ which turn out to be solvable groups with Fitting height $2$, see \cite{lvyangqian}.

If $G$ is a finite group with $n$ a multiple of $|G|$,
then $\mathfrak{G}=\gal{\mathbb{Q}_n}{\mathbb{Q}}$ (the \emph{Galois group of the $n$-th cyclotomic extension})
naturally acts on the set $\irr G$.
In the next theorem, we weaken the condition in \cite{lvyangqian} and classify finite groups $G$ in which $\irrs{G}$ forms a Galois conjugacy class.

\begin{thmA}
	Let $G$ be a finite group.
	\begin{description}
		\item[(a)] Assume that $\irrs{G}$ forms a Galois conjugacy class. 
		Then $G=P\rtimes H$ where $P$, the nilpotent residue of $G$, is a Sylow $p$-subgroup of $G$ for some prime $p$;
		$\ker(\chi)=\cent{H}{P}\times U$, where $U=\Phi(P)=P'$, for all $\chi\in \irrs{G}$;
		$H/\cent{H}{P}$ acts Frobeniusly and irreducibly on $P/U\cong (C_p)^n$
		such that $|H/\cent{H}{P}|=(p^{n}-1)/d$ where $d=|\irrs{G}|$ divides $p-1$;
		$\z{\GL(P/U)}(H/\cent{H}{P})$ acts transitively on nontrivial elements of $P/U$ when identifying $H/\cent{H}{P}$ as a subgroup of $\GL(P/U)$;
		the field of values of $\chi$ is contained in the $p$-th cyclotomic field for all $\chi\in \irrs{G}$;
		and we are in one of the following cases.
		\let\thefootnote\relax\footnote{$(C_p)^n$ is the elementary abelian $p$-group of order $p^n$;}
		\let\thefootnote\relax\footnote{$E(p^{1+2})$ is either the quaternion group of order $8$ when $p=2$ or the Heisenberg group of order $p^3$ when $p > 2$.}
		\begin{description}
			\item[(a1)] $P\cong (C_p)^n$, $\cent{H}{P}=1$ and $H\cong C_{(p^n-1)/d}$.
			\item[(a2)] $P\cong (C_p)^2$ where $p$ is a Mersenne prime, $\cent{H}{P}=1$ and $H=Q\times D$ where $Q\in\syl{2}{H}$ is a generalized quaternion $2$-group and $D$ is cyclic normal $2$-complement of $Q$ in $H$.
			\item[(a3)] $P\cong (C_p)^n$, $\cent{H}{P}>1$, $H\in\syl{q}{G}$ and $H/\cent{H}{P}$ has prime order $q=1+p+\cdots+p^{n-1}$.
			\item[(a4)] $P\cong E(p^{1+2})$, $\cent{H}{P}=1$, and $H\cong C_{2(p+1)}$ where $|H:\cent{H}{U}|=2$.
			\item[(a5)] $P\cong E(p^{1+2})$, $\cent{H}{P}=1$, and $H=\cent{H}{U}\cong C_{p+1}$.
			\item[(a6)] $P\cong E(p^{1+2})$ where $p$ a Mersenne prime, $\cent{H}{P}=1$, and $H=\cent{H}{U}$ is a generalized quaternion $2$-group of order $(p^2-1)/d$ where $d\in\{(p-1)/2,p-1\}$.
			\item[(a7)] $P\cong E(2^{1+2})$, $\cent{H}{P}>1$, and $H\in\syl{3}{G}$ and $G/\cent{H}{P}\cong \SL (2,3)$.
		\end{description}
		\item[(b)] The converse of the above result is true.
	\end{description}
\end{thmA}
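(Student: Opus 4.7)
The plan for (a) begins with the standard fact that each $\sigma \in \mathfrak{G}$ preserves both the degree and the kernel of an irreducible character. Hence all $\chi \in \irrs G$ share a common degree $f$ and a common kernel $K$, so we may replace $G$ by $G/K$ and assume every $\chi \in \irrs G$ is faithful. By the Gagola--Lewis theorem $G$ is non-nilpotent; moreover $\irrs G$ is now precisely the set of faithful $\chi \in \irr G$ with $\chi(1)^2 \nmid |G|$, and the hypothesis becomes that these characters form a single $\mathfrak G$-orbit.

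Next, adapting the strategy developed in \cite{lvyangqian} for the $|\irrs G|=1$ case, I would show that $\fitt G$ is a Sylow $p$-subgroup $P$ of $G$, that $G = P \rtimes H$, and that $P$ is the nilpotent residue. A coprime constituent inside $\fitt G$ would, via Clifford theory, yield irreducibles in $\irrs G$ whose degrees or whose kernels are incompatible with a single $\mathfrak G$-orbit. The same approach shows that $H/\cent H P$ acts both irreducibly and Frobeniusly on $P/\Phi(P)$: reducibility would produce faithful characters of two distinct degrees (hence not Galois conjugate), while a nontrivial point stabilizer would spawn additional characters in $\irrs G$ outside the main orbit. Transitivity of $\z{\GL(P/U)}(H/\cent H P)$ on $(P/U) \setminus \{0\}$ should then follow by matching the $\mathfrak G$-orbit count on $\irrs G$ with the $(H/\cent H P)$-orbit count on $\irr{P/U} \setminus\{1\}$, forcing scalars in $\GL(P/U)$ to permute these orbits transitively.

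The case analysis splits on whether $P$ is abelian. If $P \cong (C_p)^n$, the faithful irreducibles of $G$ are induced characters $\lambda^G$ for nontrivial linear $\lambda \in \irr P$ with trivial $H$-stabilizer, giving $|\irrs G| = (p^n-1)/|H/\cent H P|$; equating this with the $\mathfrak G$-orbit size and invoking the classification of irreducible fixed-point-free subgroups of $\GL(n,p)$ produces cases (a1)--(a3), the Mersenne hypothesis in (a2) arising from a Zsigmondy-type analysis when the Sylow $2$-subgroup of $H$ is generalized quaternion. If $P$ is nonabelian, then $\Phi(P) = P' = \z P$ is cyclic of order $p$ and $P \cong E(p^{1+2})$; each nontrivial $\lambda \in \irr U$ lies under a unique faithful character of $P$ of degree $p$, and the joint orbit structure of $H$ on $\irr U \setminus \{1\}$ and on $P/U$, combined with the Galois action, forces cases (a4)--(a7).

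The main obstacle is the extraspecial case: one must simultaneously track the $H$-actions on $U$, on $P/U$ and on the $p-1$ faithful characters of $P$, verify the field-of-values constraint $\mathbb{Q}(\chi) \subseteq \mathbb{Q}_p$ via Gauss-sum style computations, and eliminate all subgroup configurations of $\GL(P/U)$ and $\Aut(P)$ outside the listed ones. The sporadic $\SL(2,3)$ configuration in (a7) should come from the exceptional subgroup structure of $\Aut(Q_8)$ at $p=2$, and the Mersenne restriction in (a6) from the requirement that $H$ contain a generalized quaternion subgroup of $\GL(2,p)$. Part (b), the converse, reduces to a direct case-by-case verification: for each listed family one enumerates $\irr G$, identifies $\irrs G$ through the degree condition $\chi(1)^2 \nmid |G:\ker\chi|$, and confirms that it constitutes a single $\mathfrak G$-conjugacy class.
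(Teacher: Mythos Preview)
Your outline has two genuine gaps. First, you never establish that $G$ is solvable. You invoke $\fitt G$ and Clifford-theoretic arguments as if the Fitting subgroup controls $G$, but without solvability this fails. The paper devotes a separate proposition to ruling out nonabelian composition factors, using results of Qian on $p$-parts of character degrees for simple groups together with an \textsc{Atlas} check for $A_7,A_{11},A_{13},M_{22}$; this step is not a formality and cannot be absorbed into the Gagola--Lewis citation.

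Second, your reduction ``replace $G$ by $G/K$'' collapses exactly the information you need for cases (a3)--(a7). In $G/K$ the nilpotent residue is $PK/K\cong P/U$, which is \emph{always} elementary abelian; so after your reduction the dichotomy ``$P$ abelian versus $P$ nonabelian'' is invisible, and the extraspecial cases simply do not arise. The paper does not pass to $G/K$ globally; instead it proves directly in $G$ that $K=\cent H P\times U$ with $U=\Phi(P)=P'$, and then separately analyses the cases $U=1$, $U=K>1$, and $U\neq K$. In particular your assertion ``if $P$ is nonabelian then $\Phi(P)=P'=\z P$ is cyclic of order $p$'' is precisely the hard step: one must first show $U$ is minimal normal in $G$ (ruling out a longer $G$-chief series inside $P$, which the paper does via a commutator-map argument identifying $W\cong P/U$ as $\cent H{U/W}$-modules), and then force $|P:U|=p^2$ using the divisibility constraint $|H|\mid|\cent H u|^2$ for all $u\in U$. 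That constraint comes from the observation that every $\chi\in\irr{G\mid U}$ has $\ker\chi\neq K$ and hence lies in $\irrn G$; you never isolate this numerical condition, and without it the wedge-product/eigenvalue analysis that pins down $n=2$ and $P\cong E(p^{1+2})$ cannot get started.
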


The next corollary follows immediately from Theorem A.

\begin{corB}
	If $G$ is a finite group in which $\irrs{G}$ forms a Galois conjugacy class, then $G$ is solvable with Fitting height $2$.
\end{corB}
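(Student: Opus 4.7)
The plan is to derive the corollary directly from Theorem~A by inspecting the seven structural cases (a1)--(a7). Once we know that $G = P \rtimes H$ with $P$ the normal Sylow $p$-subgroup (equal to the nilpotent residue of $G$) and $H$ of one of the prescribed forms, both the solvability claim and the Fitting height~$2$ claim read off immediately.

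First I would observe that $P$, being a $p$-group, is nilpotent. The task therefore reduces to analyzing $H$. Cases (a1), (a4), (a5) give $H$ cyclic; case (a2) gives $H = Q \times D$ with $Q$ a generalized quaternion $2$-group and $D$ cyclic of odd order; case (a6) gives $H$ a generalized quaternion $2$-group; and cases (a3), (a7) give $H \in \syl{q}{G}$ for a prime~$q$, so $H$ is a $q$-group. Thus $H$ is nilpotent in every one of the seven cases, and hence $G = P \rtimes H$ is solvable.

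For the Fitting height, since $P \trianglelefteq G$ is nilpotent we have $P \subseteq F(G)$, so $G/F(G)$ is a quotient of $G/P \cong H$, which is nilpotent by the previous paragraph. Hence the Fitting height of $G$ is at most~$2$; and because $P$ is nontrivial in each of (a1)--(a7) (being the nilpotent residue), $G$ is not nilpotent, so the Fitting height equals exactly~$2$.

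The only point deserving caution is case~(a7), where the explicit quotient $G/\cent{H}{P} \cong \SL(2,3)$ is non-nilpotent; this is not a genuine obstacle, since $H$ itself is a $3$-group (a Sylow $3$-subgroup of $G$), and the normal subgroup $\cent{H}{P}$ of $G$ (normal in $H$ and centralized by $P$) sits inside $F(G)$ alongside $P$, so $|G/F(G)|$ divides $|H/\cent{H}{P}| = 3$. The entire proof is thus a mechanical inspection of Theorem~A, with no real difficulty.
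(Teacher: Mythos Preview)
Your proposal is correct and follows the paper's approach, which simply states that the corollary is immediate from Theorem~A. One minor remark: the case-by-case verification that $H$ is nilpotent is unnecessary, since Theorem~A already identifies $P$ as the nilpotent residue of $G$, so $H \cong G/P$ is nilpotent by definition.
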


The paper is organized as follows: in Section 2 we collect some useful results and in Section 3 we prove our main theorem.

In the following, all groups considered are finite and $p$ always denotes a prime.
We use standard notation in character theory, as in \cite{isaacs1994}.

\section{Preliminaries}

We will use the following notations: $Q_{2^m}$, where $m\geq 3$, is the generalized quaternion $2$-group of order $2^m$; $(C_p)^n$ is the elementary abelian $p$-group of order $p^n$ for a positive integer $n$; and $E(p^{1+2})$ is either the quaternion group of order $8$ when $p=2$ or the Heisenberg group of order $p^3$ when $p>2$.
Also, we use $\mathrm{M}(G)$ to denote the Schur multiplier of a group $G$ and
$\mathbb{M}$ to denote the set of Mersenne primes.


\begin{lem}[\cite{zsig1892}]\label{zsig}
	Let $p$ be a prime and $n$ a positive integer. Then Zsigmondy prime divisor exists for $p^n-1$ except $p^n=2$; $n=2$ and $p\in\mathbb{M}$; or $p^n=2^6$.
\end{lem}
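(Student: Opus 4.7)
This is the classical theorem of Zsigmondy, so I would follow the standard cyclotomic-polynomial argument. The plan is to reduce the existence of a Zsigmondy prime divisor for $p^n-1$ to a question about prime divisors of $\Phi_n(p)$, the $n$-th cyclotomic polynomial evaluated at $p$, and then pin down all exceptions by a lower bound on $\Phi_n(p)$.

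First, I would use the factorization $p^n-1=\prod_{d\mid n}\Phi_d(p)$, together with the following standard dichotomy: if $q$ is a prime dividing $\Phi_n(p)$, then either the multiplicative order of $p$ modulo $q$ is exactly $n$ (so $q$ is a Zsigmondy prime for $p^n-1$), or $q\mid n$, in which case $q$ is the largest prime divisor of $n$ and $q^2\nmid\Phi_n(p)$ apart from a tightly controlled borderline configuration. This follows from Fermat's little theorem together with a lifting-the-exponent computation.

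Next, assume for contradiction that no Zsigmondy prime exists. Then every prime divisor of $\Phi_n(p)$ divides $n$, and by the dichotomy above $\Phi_n(p)$ collapses to a single prime $q\mid n$. I would confront this with the lower bound
\[
\Phi_n(p)=\prod_{\zeta}(p-\zeta)\geq (p-1)^{\varphi(n)},
\]
where $\zeta$ ranges over primitive $n$-th roots of unity. Combining $\Phi_n(p)=q\leq n$ with this inequality leaves only finitely many candidate pairs $(p,n)$, and a hand check isolates precisely $p^n=2$, the case $n=2$ with $p\in\mathbb{M}$, and $p^n=2^6$.

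The main obstacle is the delicate case analysis surrounding the dichotomy when $q\mid n$: verifying the bound $q^2\nmid\Phi_n(p)$ with its single borderline exception, and then discharging the small-$(p,n)$ residue cases to recover exactly the three stated exceptions — neither more nor fewer — is the only genuinely finicky part of the argument. Everything else is a routine application of cyclotomic arithmetic.
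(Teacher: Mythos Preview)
The paper does not supply a proof of this lemma at all: it is stated with a bare citation to Zsigmondy's original 1892 paper and treated as a black box throughout. So there is no in-paper argument to compare your proposal against.

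Your outline is the standard cyclotomic-polynomial proof of Zsigmondy's theorem and is correct in its broad strokes. One small imprecision: when no Zsigmondy prime exists, the conclusion is not quite that $\Phi_n(p)$ equals a single prime $q$, but rather that $\Phi_n(p)$ is a power of the largest prime $q$ dividing $n$, and the lifting-the-exponent step then bounds this power (typically $\Phi_n(p)\le q$ for $n>2$, with the $n=2$ case handled separately). Once that is in place, the inequality $(p-1)^{\varphi(n)}\le \Phi_n(p)\le n$ does force the finitely many exceptions you list. The case check at the end is indeed the only fiddly part, exactly as you say.
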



\begin{lem}[\cite{isaacs1999}]\label{isaacs}
	If a nontrivial nilpotent group $N$ acts faithfully and coprimely on a group $G$,
	then there exists some $g\in G$ such that $|\cent{N}{g}|\leq (|N|/p)^{1/p}$ where $p$ is the minimal prime factor of $|N|$.
\end{lem}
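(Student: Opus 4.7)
The plan is to reduce, via standard coprime-action machinery, to the case where $G$ is an elementary abelian $r$-group $V$ on which $N$ acts faithfully, and then extract the bound by a counting argument that exploits the nilpotent product decomposition of $N$.

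For the reduction, since $(|N|,|G|)=1$, one can refine a characteristic series of $G$ to have elementary abelian factors and pass to an $N$-invariant chief factor $V$ on which (after quotienting by the kernel) $N$ still acts faithfully; the prime $r$ with $|V|$ a power of $r$ is then coprime to $|N|$. Any $v\in V$ witnessing the bound lifts to some $g\in G$ with $|\cent{N}{g}|\le|\cent{N}{v}|$, so it suffices to prove the estimate for this faithful coprime linear action. Writing $N=N_p\times K$ with $K$ the Hall $p'$-subgroup, both $N_p$ and $K$ act faithfully on $V$, and centralizers decompose as $\cent{N}{v}=\cent{N_p}{v}\times\cent{K}{v}$ by nilpotence, so any small single-element centralizer in $V$ must have a small projection in each Sylow factor.

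For the core estimate, I would consider the diagonal action of $N$ on $V^p$ and double-count pairs $(n,\mathbf v)\in N\times V^p$ with $n\cdot\mathbf v=\mathbf v$. The identity contributes $|V|^p$ on the $n$-side, and each $1\ne n$ fixes a proper $\FF_r$-subspace in each coordinate, so its fixed set has size at most $(|V|/r)^p$, giving
\[
\sum_{\mathbf v\in V^p}|\cent{N}{\mathbf v}|\le|V|^p+(|N|-1)(|V|/r)^p.
\]
Selecting $\mathbf v$ attaining the minimum of $|\cent{N}{\mathbf v}|=|\bigcap_i\cent{N}{v_i}|$ and then, via the Sylow decomposition, identifying a single coordinate $v_i$ responsible for the bulk of the intersection, should yield an element with centralizer at most $(|N|/p)^{1/p}$. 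An inductive variant on $|N|$, shrinking the chain $N\ge\cent{N}{v_1}\ge\cent{N}{v_1,v_2}\ge\cdots$ by successive choices in $V$ and tracking the worst shrinkage by the minimal prime $p$, is the natural way to close the loop.

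The principal obstacle is producing the exponent $1/p$ cleanly. A naive single count on $V$ yields only $|\cent{N}{g}|\le|N|/r$, and the diagonal count above mixes the prime $r$ into the estimate in a way that must be eliminated. Extracting the sharp $p$-th root requires simultaneously using the nilpotence of $N$ (so that centralizers factor across Sylow subgroups), the faithful coprime action (to bound $|\Fix_V(n)|$ in a way that does not degrade with $r$), and the minimality of $p$ in $|N|$ (so that the averaging on $V^p$ is balanced and one Sylow factor dominates). The delicate bookkeeping needed to convert a tuple estimate into a single-element estimate, and to eliminate the $r$-dependence in favour of the minimal prime $p$, is the technical crux.
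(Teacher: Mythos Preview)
The paper does not prove this lemma at all; it is simply quoted from Isaacs's 1999 paper \emph{Large orbits in nilpotent actions}, so there is no in-paper argument to compare against. What you have written is therefore not a reconstruction of the paper's proof but an independent attempt at Isaacs's theorem, and as you yourself flag, it is incomplete in ways that are not merely bookkeeping.

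Two concrete gaps. First, the reduction step is wrong as stated: faithfulness of $N$ on $G$ does not force faithfulness on any single $N$-chief factor $V$, and if you ``quotient by the kernel'' you are now proving the bound for a proper quotient $\bar N$ of $N$. A small centralizer $|\cent{\bar N}{v}|$ only lifts to a centralizer in $N$ that contains the whole kernel, so you have lost exactly the control you need. The correct reduction (as in Isaacs's paper) keeps all of $N$ and works with a suitable $N$-module built from several chief factors, or proceeds by induction on $|G|$ without ever passing to a quotient of $N$.

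Second, the diagonal count on $V^p$ produces a \emph{tuple} $\mathbf v=(v_1,\dots,v_p)$ with small $\bigcap_i \cent{N}{v_i}$, not a single element of $G$ with small centralizer. Your proposed extraction, ``identifying a single coordinate $v_i$ responsible for the bulk of the intersection,'' fails in general: one can have each $\cent{N}{v_i}$ of index $p$ in $N$ while the intersection has index $p^p$. Moreover the count as written bounds the minimum centralizer by roughly $1+(|N|-1)/r^p$, which involves the wrong prime $r$ and the wrong shape (no $p$-th root). Nilpotence of $N$ is genuinely needed to get the exponent $1/p$ tied to the \emph{smallest} prime of $|N|$, and your outline does not explain how to trade the tuple bound and the Sylow decomposition for a single-element bound of that exact form. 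These are the substantive obstacles in Isaacs's argument, not side issues; the proof in \cite{isaacs1999} handles them via an orbit-counting inequality specific to nilpotent actions rather than the averaging you sketch.
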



\begin{lem}[\cite{lvyangqian}]\label{Frob}
	Let a nontrivial nilpotent group $H$ act faithfully and irreducibly on an elementary abelian $p$-group $V$ and let $G=V\rtimes H$.
	Assume that either $\cent{H}{v}=1$ or $|H|~\big|~ |\cent{H}{v}|^2$ for each $ v\in V$.
	Then $G$ is a Frobenius group with complement $H$ and kernel $V$.
\end{lem}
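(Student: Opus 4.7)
My plan is to verify directly the Frobenius condition $\cent{H}{v}=1$ for every $0\neq v\in V$; once this is established, $G=V\rtimes H$ is Frobenius with kernel $V$ and complement $H$ by definition.

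First I would show that $H$ acts coprimely on $V$. The normal Sylow $p$-subgroup $\oh{p}{H}$ of the nilpotent group $H$ has a nonzero fixed subspace on $V$ (any $p$-group acting on a nontrivial $\FF_p$-space has nonzero fixed points), which is $H$-invariant, hence equals $V$ by irreducibility; so $\oh{p}{H}=1$ by faithfulness and $\gcd(|H|,p)=1$. I would then apply Lemma~\ref{isaacs} to the faithful coprime action of $H$ on $V$, obtaining some $w\in V$ with $|\cent{H}{w}|\leq(|H|/q)^{1/q}$, where $q$ is the smallest prime dividing $|H|$. An elementary estimate shows $(|H|/q)^{1/q}<|H|^{1/2}$ whenever $|H|\geq 2$ (check $q=2$ and $q\geq 3$ separately). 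The hypothesis then forces $\cent{H}{w}=1$, since the alternative $|H|\mid|\cent{H}{w}|^2$ would yield $|\cent{H}{w}|\geq|H|^{1/2}$ in contradiction with the Isaacs bound. Hence $H$ admits at least one regular orbit on $V$.

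The main difficulty is to upgrade this single regular orbit to every nonzero vector. Suppose for contradiction that $0\neq v_0\in V$ satisfies $K:=\cent{H}{v_0}>1$; then $|H|\mid|K|^2$ yields $|K|\geq|H|^{1/2}$ and $[H:K]\leq|H|^{1/2}$. Since $\langle Hv_0\rangle$ is a nonzero $H$-invariant subspace of $V$ and hence equals $V$, the normal core satisfies $\bigcap_{h\in H}hKh^{-1}=\cent{H}{\langle Hv_0\rangle}=\cent{H}{V}=1$; together with $[H:K]\leq|H|^{1/2}$, this says $H$ embeds as a transitive subgroup of $\operatorname{Sym}(H/K)$ of degree at most $|H|^{1/2}$.

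To close the argument I would exploit the Sylow decomposition $H=\prod_q H_q$. Setting $K_q=K\cap H_q=\cent{H_q}{v_0}$, we get $|H_q|\mid|K_q|^2$ for each $q\mid |H|$, and $K_q<H_q$ (else $\cent{V}{H_q}$ would be a nonzero $H$-invariant subspace equal to $V$, contradicting faithfulness of $H_q$); moreover the core of $K_q$ in $H_q$ is trivial, since in $H=H_q\times\prod_{q'\neq q}H_{q'}$ the core of $K$ in $H$ decomposes as a product of Sylow cores. Combining this with Lemma~\ref{isaacs} applied to each Sylow $H_q$ on $V$, the severe restrictions on faithful transitive $q$-group permutation actions of degree at most $|H_q|^{1/2}$ should yield a contradiction prime by prime. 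This Sylow-by-Sylow reconciliation of the centralizer dichotomy with the Isaacs bound is what I expect to be the principal technical hurdle.
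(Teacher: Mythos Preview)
The paper does not prove this lemma at all; it is quoted from \cite{lvyangqian} and carries no proof here. So there is nothing in the present paper to compare your argument against.

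On the merits, your proposal is correct and efficient up through the point where you extract a single regular $H$-orbit from Lemma~\ref{isaacs}: the coprimality argument is right, and the inequality $(|H|/q)^{1/q}<|H|^{1/2}$ does force $\cent{H}{w}=1$ for the Isaacs element $w$.

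The gap is exactly where you flag it. Your Sylow-by-Sylow reduction yields, for each prime $q\mid|H|$, a $q$-group $H_q$ of order $q^{n_q}$ together with a core-free subgroup $K_q$ of index $q^{m_q}$ satisfying $2m_q\le n_q$ (from $|H_q|\mid|K_q|^2$) and $n_q\le (q^{m_q}-1)/(q-1)$ (from the faithful permutation action on $q^{m_q}$ points). These two inequalities are \emph{not} jointly contradictory: for $q=2$ and $m_q=3$ they allow $n_q\in\{6,7\}$, and indeed a Sylow $2$-subgroup of $S_8$ has order $2^7$ with a core-free point stabiliser of index $8$ and $|K_2|^2=2^8$ divisible by $|H_2|=2^7$. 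So the permutation-degree bound alone cannot finish the argument, and applying Lemma~\ref{isaacs} to $H_q$ only produces another regular $H$-orbit (via the global dichotomy), not information about $v_0$.

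What your final step discards is the irreducibility of $V$ beyond the core-free conclusion. One workable route is to use that $\z{H}$ acts fixed-point-freely on $V^\sharp$ (by Clifford, $V$ restricted to the abelian normal subgroup $\z{H}$ is homogeneous and faithful, hence free on nonzero vectors), so $\cent{H}{v_0}\cap\z{H}=1$; combining this with the subnormality of subgroups in nilpotent groups and the index bound $[H:K]\le|H|^{1/2}$ gives much sharper structural constraints than the raw permutation embedding. Without an argument of this kind, the proposal remains a plan rather than a proof.
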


\begin{lem}\label{specialextra}
	Let $P$ be a $p$-group such that $\Phi(P)=P'=\z{P}$.
	If $|P:P'|=p^2$, then $P$ is an extraspecial $p$-group of order $p^3$.
\end{lem}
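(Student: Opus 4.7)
The plan is to show that $P$ is forced to have order $p^3$ by analyzing the structure imposed by the three equalities $\Phi(P)=P'=Z(P)$ together with the index condition. The extraspecial conclusion then follows immediately from these equalities together with $|P'|=p$.

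First I would rule out the abelian case: if $P$ were abelian then $P'=1$ and so $Z(P)=1$; but $Z(P)=P$ for abelian $P$, forcing $P=1$, which contradicts $|P:P'|=p^2$. Thus $P$ is non-abelian and, since $P'\leq Z(P)$, it has nilpotency class exactly $2$. Next, because $\Phi(P)=P'$ and $|P:\Phi(P)|=p^2$, the Burnside basis theorem gives generators $a,b$ with $P=\langle a,b\rangle$ and tells us that $P/P'$ is elementary abelian, so $a^p,b^p\in P'$.

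The key step is to show that $P'$ is cyclic of order $p$. In a group of class $2$, the commutator map is bilinear modulo central elements: for any $x=a^ib^ju$ and $y=a^kb^\ell v$ with $u,v\in P'\leq Z(P)$, one computes $[x,y]=[a,b]^{i\ell-jk}$. Hence every commutator of generators in $P$ is a power of $[a,b]$, and since commutators of class-$2$ groups are multiplicative in each slot and central, the set of all commutators is already closed under multiplication, giving $P'=\langle[a,b]\rangle$, a cyclic group. To bound its order, I use that $a^p\in\Phi(P)=P'\leq Z(P)$, so
\[
[a,b]^p=[a^p,b]=1,
\]
where the first equality uses the class-$2$ identity $[x,y]^n=[x^n,y]$ valid when $[x,y]$ is central. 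Since $P$ is non-abelian, $[a,b]\neq 1$, whence $|P'|=p$.

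Combining, $|P|=|P:P'|\cdot|P'|=p^2\cdot p=p^3$, and the hypothesis $\Phi(P)=P'=Z(P)$ of order $p$ is precisely the definition of an extraspecial $p$-group of order $p^3$. The only delicate point, which I would take the most care with, is the derivation of the cyclicity of $P'$ and the identity $[a,b]^p=[a^p,b]$; both rest on the class-$2$ commutator identities, which are straightforward but must be invoked correctly since one slot needs to be central for the power identity to apply.
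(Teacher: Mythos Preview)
Your proof is correct but proceeds along a different route from the paper's. You work directly with a generating pair $a,b$ and the class-$2$ commutator calculus: bilinearity forces $P'=\langle[a,b]\rangle$, and the identity $[a,b]^p=[a^p,b]=1$ (using $a^p\in\Phi(P)=\z{P}$) pins down $|P'|=p$. The paper instead observes that any maximal subgroup $A$ of $P$ contains $\z{P}$ with index $p$, hence is abelian (since $A/\z{A}$ would otherwise be cyclic of order $p$); then, picking $g\in P\setminus A$, the homomorphism $A\to P'$, $a\mapsto[a,g]$, has kernel $\z{P}$ and image $P'$, yielding the index formula $|A|=|P'|\cdot|\z{P}|=|P'|^2$. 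Comparing with $|A:P'|=p$ gives $|P'|=p$ at once.

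Your argument is slightly longer but entirely self-contained and makes the commutator structure explicit; the paper's argument is shorter and more structural, but leans on the identity $|A|=|P'||\z{P}|$ for a $p$-group with an abelian maximal subgroup, which it states without derivation. Both reach $|P'|=p$ and the extraspecial conclusion in the same way thereafter.
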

\begin{proof}
	Since $P$ is a $p$-group such that $\Phi(P)=P'=\z{P}$ has index $p^2$ in $P$,
	$P$ possesses an abelian maximal subgroup $A$,
	and hence $|A|=|P'||\z P|=|P'|^2$.
	As $|A:P'|=p$, $|P'|=p$.
	Consequently, $P$ is an extraspecial $p$-group of order $p^3$.
\end{proof}

We say a group $H$ acts \emph{Frobeniusly} on a group $K$ if $H$ is nontrivial such that $\cent{K}{h}=1$ for every nontrivial element $h\in H$.
In the next three lemmas, we deal with faithful coprime action of a nilpotent group on a $p$-group in some special circumstances.

\begin{lem}\label{Frobcomp}
	Let a nilpotent group $H$ act Frobeniusly and irreducibly on an elementary abelian $p$-group $V$ and let $G=V\rtimes H$.
	Write $|V|=p^n$ for some integer $n$.
	Assume that $|H|=(p^n-1)/d$ where $d\mid p-1$.
	Then
	\begin{description}
		\item[(a)] Assume that Zsigmondy prime divisor of $p^n-1$ exists. Then $H\cong C_{(p^n-1)/d}$.
		\item[(b)] Assume that Zsigmondy prime divisor of $p^n-1$ doesn't exist. Then we are in one of the following cases.
		      \begin{description}
			      \item[(b1)] Either $H\cong C_{(p^2-1)/d}$ where $p\in \mathbb{M}$ or $H\cong C_{63}$ where $p^n=2^6$.
			      \item[(b2)] $p$ is a Mersenne prime and either $H\cong Q_{p+1}\times C_{(p-1)/d}$ where $2\mid d$ and $p\geq 7$ or $H\cong Q_{2(p+1)}\times C_{(p-1)/2d}$ where $2\nmid d$ and $p\geq 3$.
		      \end{description}
	\end{description}
\end{lem}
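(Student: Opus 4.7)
The plan is to combine two structural facts. First, the hypothesis that $H$ acts Frobeniusly and faithfully on the elementary abelian $p$-group $V$ with $\gcd(|H|,p)=1$ makes $H$ a nilpotent Frobenius complement, so by standard Zassenhaus theory every odd Sylow subgroup of $H$ is cyclic and the Sylow $2$-subgroup of $H$ is cyclic or generalized quaternion. Together with nilpotence this forces $H$ to be either cyclic or of the form $Q_{2^b}\times C_m$ with $m$ odd and $b\geq 3$. Second, whenever a Zsigmondy prime $r$ for $p^n-1$ exists, the multiplicative order of $p$ modulo $r$ is exactly $n$, so the minimal $\FF_p$-dimension of a faithful module for an element of order $r$ is $n$.

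For (a), I would take a Zsigmondy prime $r$ of $p^n-1$; since $d\mid p-1$ and $r\nmid p^i-1$ for $i<n$, the prime $r$ divides $|H|$. Let $x\in H$ have order $r$. Because $\dim V=n$ equals the minimal dimension of a faithful $\FF_p\langle x\rangle$-module, $\langle x\rangle$ acts irreducibly on $V$, and therefore $C_{\GL(V)}(x)\cong\FF_{p^n}^*$ is a Singer cycle of order $p^n-1$. Nilpotence of $H$ puts $x$ in its center, so $H\leq C_{\GL(V)}(x)$, which is cyclic of order $p^n-1$. The unique subgroup of this Singer cycle of index $d$ gives $H\cong C_{(p^n-1)/d}$.

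For (b), I would use Lemma~\ref{zsig} to list the three Zsigmondy exceptions and dispatch each in turn. The case $p^n=2$ is vacuous because it would force $|H|=1$. When $p^n=2^6$ one has $d=1$ and $|H|=63$; the cyclicity of the Sylow $3$- and Sylow $7$-subgroups of a Frobenius complement, combined with nilpotence, gives $H\cong C_{63}$. When $n=2$ and $p=2^k-1\in\mathbb{M}$, I would write $p^2-1=(p-1)(p+1)$, use $p+1=2^k$ and the fact that $(p-1)/2=2^{k-1}-1$ is odd to see that the $2$-part of $p^2-1$ equals $2^{k+1}$. A cyclic $H$ yields the remaining option in (b1). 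Otherwise $H\cong Q_{2^b}\times C_m$ with $m$ odd, and because $d\mid p-1$ the $2$-part of $d$ is either $1$ or $2$; splitting on this parity recovers $|Q_{2^b}|=2(p+1)$ with $m=(p-1)/(2d)$ when $d$ is odd and $|Q_{2^b}|=p+1$ with $m=(p-1)/d$ when $d$ is even, and $m$ is computed by dividing $|H|$ by $|Q_{2^b}|$.

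The main obstacle I expect is the arithmetic bookkeeping in (b2): correctly tracking the $2$-part of $(p^2-1)/d$ for each parity of $d$, identifying it with the order of the generalized quaternion factor, and checking that the constraint $|Q_{2^b}|\geq 8$ precisely matches the lower bounds $p\geq 3$ or $p\geq 7$ stated in the two subcases (so that $Q_{2^b}$ is genuinely generalized quaternion rather than a cyclic $2$-group of small order).
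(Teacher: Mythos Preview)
Your proposal is correct and follows essentially the same approach as the paper. For (a), the paper takes a subgroup $Q\le\z{H}$ of Zsigmondy prime order, observes it acts irreducibly on $V$, and invokes Schur's lemma plus Wedderburn to embed $H$ in $\mathrm{End}_{\mathbb{F}_p[Q]}(V)^\times\cong\mathbb{F}_{p^n}^\times$; your Singer-cycle/centralizer phrasing is the same argument in different language. For (b), the paper simply writes ``by calculation'' after recording that a nilpotent Frobenius complement is cyclic or (generalized quaternion)$\times$(cyclic); your explicit tracking of the $2$-part of $(p^2-1)/d$ and the resulting bounds $p\ge 3$ or $p\ge 7$ is exactly the computation the paper omits.
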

\begin{proof}
	Assume that $q$ is a Zsigmondy prime divisor of $p^n-1$.
	Let $Q$ be a subgroup of $\z {H}$ of order $q$.
	Then $H$ can be embedded in the group of units of $\mathrm{End}_{\mathbb{F}_p[Q]}(V)$.
	Note that $Q$ acts irreducibly on $V$, and so $H$ is cyclic by Schur's lemma and Wedderburn's little theorem.

	Assume that Zsigmondy prime divisor of $p^n-1$ doesn't exist.
	Since $G$ is a Frobenius group with kernel $V\cong (C_p)^n$,
	it follows by Lemma \ref{zsig} that either $n=2$ and $p\in\mathbb{M}$ or $p^n=2^6$.
	Note that $H$ is a nilpotent Frobenius complement, and hence $H$ is either cyclic or generalized quaternion by cyclic.
	Thus, by calculation either (b1) or (b2) holds.
\end{proof}

\begin{lem}\label{exspec}
	Let a nilpotent group $H$ act coprimely on a nonabelian $p$-group $P$ and let $G=P\rtimes H$.
	Assume that $U$ and $P/U\cong (C_p)^n$ are the $G$-chief factors of $P$ and that $H$ acts Frobeniusly on $P/U$.
	If $|H|=(p^n-1)/d$, where $d\mid p-1$, divides $|\cent{H}{u}|^2$ for each $u\in U$, then $P\cong E(p^{1+2})$ and one of the following holds.
	\begin{description}
		\item[(a)] Either $H=\cent{H}{U}\cong C_{p+1}$ or $|H:\cent{H}{U}|=2$ and $H\cong C_{2(p+1)}$ where $p>2$.
		\item[(b)] $H=\cent{H}{U}\cong Q_{(p^2-1)/d}$, where $p\in\mathbb{M}$ and $d\in\{(p-1)/2,p-1\}$.
	\end{description}
\end{lem}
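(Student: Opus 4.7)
The plan is in three steps: first, establish $U=\frat{P}=P'=\z{P}$; second, reduce to $n=2$ via composition-factor analysis of the $H$-equivariant commutator bracket $V\wedge V\twoheadrightarrow U$; third, identify $H$ inside $\Out(P)\cong\GL_2(p)$. The main obstacle is the dimension reduction in the second step, which requires combining Zsigmondy's theorem (Lemma \ref{zsig}) with an exact divisibility estimate extracted from the hypothesis $|H|\mid|\cent{H}{u}|^2$.

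For the first step, $U\cap\z{P}$ is a nontrivial $G$-normal subgroup of $U$ (nontrivial because $U$ is normal in the $p$-group $P$), so the minimality of $U$ forces $U\leq\z{P}$; then $\frat{P}\leq U$ because $P/U$ is elementary abelian, minimality of $U$ together with nonabelianness of $P$ forces $P'=U$, and the simplicity of $P/U$ as $G$-module forces $\z{P}=U$. Consequently the commutator bracket descends to a nondegenerate $H$-equivariant alternating surjection $V\wedge V\twoheadrightarrow U$ with $V=P/U$, so $U$ is an $H$-composition factor of $V\wedge V$. Setting $K=\cent{H}{U}$, the quotient $H/K$ acts faithfully and irreducibly on $U$.

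For the second step, apply Lemma \ref{Frobcomp} to $H$ acting on $V$; the noncyclic cases of (b2) already force $n=2$, so the only unresolved possibilities are cyclic $H\cong C_{(p^n-1)/d}$ with $n\geq 3$, together with the sporadic case $H\cong C_{63}$ at $p^n=2^6$. In each, $H$ acts on $V\cong\FF_{p^n}$ by multiplication, and over $\overline{\FF_p}$ the module $V\wedge V$ decomposes as a direct sum of the 1-dimensional characters $\chi^{p^i+p^j}$ for $0\leq i<j\leq n-1$; the Frobenius orbits give the $\FF_p$-irreducible summands, and $U$ corresponds to one such orbit, so $|K|=\gcd(|H|,1+p^{j-i})$. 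Since $H/K$ acts on $U\setminus\{0\}$ by nontrivial field scalars, $\cent{H/K}{u}=1$ for $u\neq 0$ and the hypothesis collapses to $|H/K|\mid|K|$. A Zsigmondy prime $q$ of $p^n-1$ (which exists for $p^n\neq 2^6$ by Lemma \ref{zsig}) divides $|H|$, but $q\mid 1+p^{j-i}$ would force $n\mid 2(j-i)$, hence $j-i=n/2$, impossible for odd $n$---so no orbit satisfies the collapsed hypothesis. For even $n\geq 4$ the only surviving candidate has $j-i=n/2$ and $|K|\mid 1+p^{n/2}$, but using $(p^n-1)/d=(p^{n/2}-1)(p^{n/2}+1)/d$ together with $2d\leq 2(p-1)<p^{n/2}-1$ one sees $(p^n-1)/d\nmid(1+p^{n/2})^2$, ruling it out. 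The sporadic case $p^n=2^6$ is dispatched by direct gcd computation: $\max_{i<j}\gcd(63,2^i+2^j)=9$ while $63\nmid 81$. Thus $n=2$, and Lemma \ref{specialextra} gives $P\cong E(p^{1+2})$ with $|U|=p$.

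For the third step, since $\operatorname{Inn}(P)$ is a $p$-group and $H$ is $p$-regular, the action $H\hookrightarrow\Aut(P)$ descends to $H\hookrightarrow\Out(P)\cong\GL_2(p)$, with the induced action on $U=\z{P}\cong\FF_p$ given by $\det$; thus $K=H\cap\SL_2(p)$ and $|H/K|\mid p-1$. Reapplying Lemma \ref{Frobcomp} with $n=2$ enumerates $H$. If $H$ is cyclic, $H$ lies in the non-split torus $C_{p^2-1}$ of $\GL_2(p)$, giving $|K|=p+1$ and $|H/K|=(p-1)/d$; the constraint $|H/K|\mid|K|$ combined with $p+1\equiv 2\pmod{p-1}$ forces $(p-1)/d\in\{1,2\}$, yielding case (a). If $H=Q\times C$ is as in Lemma \ref{Frobcomp}(b2), with $Q$ generalized quaternion and $C$ cyclic of odd order (and $p$ Mersenne), then $C$ centralizes $Q$; but $\cent{\GL_2(p)}{Q}=\{\pm I\}$ by a direct computation in the non-split torus of $\GL_2(p)$, so oddness of $|C|$ forces $C=1$ and $H=K=Q\cong Q_{(p^2-1)/d}$, yielding case (b).
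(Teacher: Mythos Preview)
Your overall strategy---establish $U=\Phi(P)=P'=\z P$, reduce to $n=2$ by analysing the $H$-action on $V\wedge V$ with Zsigmondy's theorem, then locate $H$ inside $\Out(P)\cong\GL_2(p)$---is precisely the paper's, and Steps~1 and~2 and the cyclic branch of Step~3 are correct.

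The noncyclic branch of Step~3 contains a genuine error: the claim $\cent{\GL_2(p)}{Q}=\{\pm I\}$ is false. Since $Q$ is nonabelian and acts irreducibly on $V\cong\FF_p^{\,2}$ it is absolutely irreducible (Schur indices over finite fields are~$1$), so Schur's lemma gives $\cent{\GL_2(p)}{Q}=\FF_p^{\times}\cdot I\cong C_{p-1}$; for a Mersenne prime $p\ge 7$ this has nontrivial odd part $(p-1)/2$, and hence ``$|C|$ odd'' alone does not force $C=1$. The repair is to bring back the hypothesis you dropped in this branch: $C$ consists of scalars, so each $c\in C$ acts on $U$ as $\det(c)=c^{2}$, and oddness of $|C|$ yields $\cent{C}{U}=1$; therefore $K=\cent{H}{U}=\cent{Q}{U}\le Q$ is a $2$-group, and now $|H|\mid|K|^{2}$ (valid because $|U|=p$ forces $\cent{H}{u}=K$ for every nontrivial $u$) makes the odd number $|C|$ divide a $2$-power, whence $C=1$. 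This is essentially the paper's argument in its Claim~5. You also assert $H=K$ (i.e.\ $H=\cent{H}{U}$) without proof; with $C=1$ this amounts to $Q\le\SL_2(p)$, which the paper obtains by noting that for Mersenne $p$ one has $4\nmid p-1$, so every element of $\GL_2(p)$ of $2$-power order at least~$4$ is irreducible, lies in the non-split torus, and therefore has determinant $g^{p+1}=1$; as $Q$ is generated by such elements, $Q\le\SL_2(p)$.
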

\begin{proof}
	Write $V=P/U$.
	Since $U$ and $V$ are the $G$-chief factors of the nonabelian $p$-group $P$,
	$$U=\Phi(P)=P'=\z{P}$$
	where $|V|=|P/U|\geq p^2$,
	and hence $\cent{H}{P}=\cent{H}{V}$.
	Since nilpotent group $H$ acts Frobeniusly on $V$, $H$ is either cyclic or generalized quaternion by cyclic such that $\cent{H}{P}=\cent{H}{V}=1$.
	Notice that $U\leq P'\cap \z P$ and that $V$ is an $H$-module,
	and hence $U$ being an $H$-module is a homomorphic image of $V$'s Schur multiplier $V\wedge V$ (see, for instance, \cite[11.4.16]{robinson1996}).
	As $(|H|,|P|)=1$, Maschke's theorem yields that $U$ is a direct summand of $V\wedge V$.

	\smallskip

	\noindent\emph{Claim 1. If $H$ is cyclic, then $|\cent{H}{U}|=(p^m+1,|H|)$ for some integer $m$ such that $1\leq m<n$ and
		\begin{equation}\label{21}
			\frac{p^n-1}{d}~\Big|~(p^m+1,\frac{p^n-1}{d})^2.
		\end{equation}
	}
	\indent Let $\mathrm{K}$ be a splitting field of $H$ with characteristic $p$ and $H=\la h\ra$.
	Then $\cent{H}{U}=\cent{H}{U\otimes \mathrm{K}}$ and $\cent{H}{V}=\cent{H}{V\otimes \mathrm{K}}$ by \cite[Lemma 5.2]{doerk1992}; $U\otimes \mathrm{K}$ is a direct summand of $(V\otimes \mathrm{K})\wedge (V\otimes \mathrm{K})$ as $(V\otimes \mathrm{K})\wedge (V\otimes \mathrm{K})=(V\wedge V)\otimes \mathrm{K}$.
	Let $\o H=H/\cent{H}{U}$.
	Note that $U$ is a faithful irreducible $\la  \o h\ra$-module over $\mathbb{F}_p$ (the finite field of $p$ elements),
	and so $U\otimes \mathrm{K}$ is a faithful $\la  \o h\ra$-module over $\mathrm{K}$,
	thus by \cite[Theorem 9.21]{isaacs1994} the eigenvalues of $\o h$ on $U\otimes \mathrm{K}$ form a $\gal{\mathrm{K}}{\mathbb{F}_p}$-class.
	Let $\xi$ be a representative of the class. Then $o(\o h)=o(\xi)$.
	Recall that $V\cong (C_p)^n$, we write
	\[
		V\otimes \mathrm{K}=\bigoplus_{i=0}^{n-1} V_i
	\]
	where $V_i=\la v_i\ra$ are faithful $\la  h\ra$-submodules of $V\otimes \mathrm{K}$
	as $\cent{H}{V_i}=\cent{H}{V\otimes \mathrm{K}}$.
	Now, \cite[Theorem 9.21]{isaacs1994} implies that $v_i^{ h}=\varepsilon^{p^i}v_i$ where $o(\varepsilon)=o( h)$ for $0\leq i\leq n-1$, and so
	\[
		(v_i\wedge v_j)^{h}=\varepsilon^{p^i+p^j}v_i\wedge v_j.
	\]
	Observe that $(V\otimes \mathrm{K})\wedge (V\otimes \mathrm{K})$ is spanned by $v_i\wedge v_j$'s for $0\leq i<j\leq n-1$
	and that $\xi$ is one of these $\varepsilon^{p^i+p^j}$, there exist $i_0$ and $j_0=i_0+m$ such that
	$\xi=\varepsilon^{p^{i_0}+p^{j_0}}=\varepsilon^{p^{i_0}(p^m+1)}$.
	Therefore
	$$|H:\cent{H}{U}|=o(\o h)=o(\xi)=\frac{o(\varepsilon)}{(p^m+1,o(\varepsilon))}=\frac{|H|}{(p^m+1,|H|)},$$
	and so $|\cent{H}{U}|=(p^m+1,|H|)$.

	As cyclic group $\o H$ acts faithfully and irreducibly on $U$, the action is a Frobenius action,
	and so $\cent{H}{U}=\cent{H}{u}$ for each nontrivial $u\in U$.
	Since $|H|=(p^n-1)/d$, where $d\mid p-1$, divides $|\cent{H}{u}|^2$ for each $u\in U$,
	$|H|\mid |\cent{H}{U}|^{2}$,
	and hence (\ref{21}) follows immediately.

	\smallskip

	\noindent\emph{Claim 2. If $|P:U|=p^2$, then $P\cong E(p^{1+2})$. In particular, $\cent{H}{U}=\cent{H}{u}$ for each nontrivial element $u$ in $U$.}

	\smallskip

	Under the current assumption, as $U=\Phi(P)=P'=\z{P}$, $P$ is an extraspecial $p$-group of order $p^3$ by Lemma \ref{specialextra}.
	Also since $U$ and $P/U$ are the $G$-chief factor of $P$, we conclude that $P\cong E(p^{1+2})$.
	As $U\cong C_p$ is a faithful $H/\cent{H}{U}$-module, $H/\cent{H}{U}$ is cyclic and it acts Frobeniusly on $U$, so $\cent{H}{U}=\cent{H}{u}$ for each nontrivial element $u$ of $U$.

	\smallskip

	\noindent\emph{Claim 3. If Zsigmondy prime divisor of $p^n-1$ exists, then $P\cong E(p^{1+2})$ and $(a)$ holds.}

	\smallskip

	Since $H\cong HU/U$, an application of Lemma \ref{Frobcomp} to $G/U$ yields that
	$H\cong C_{(p^n-1)/d}$.
	Let $q$ be a Zsigmondy prime divisor of $p^n-1$.
	Recall that $|V|\geq p^{2}$, and so $q\mid (p^n-1)/d$ as $d\mid p-1$.
	By (\ref{21}), $q\mid p^m+1$.
	Note that $n$ is the smallest positive integer such that $q\mid p^n-1$, and hence we conclude from
	$q\mid p^{2m}-1$ that $n= 2m$ as $m<n$.
	Consequently, $|\cent{H}{U}|=p^m+1$ and again by (\ref{21})
	\[
		\frac{p^m-1}{d}~\Big|~p^m+1.
	\]
	Since $(p^m-1,p^m+1)\mid 2$, one has $(p^m-1)/d\mid 2$, and so $p^m-1 \mid 2(p-1)$.
	Therefore $m=1$ and $d\in\{p-1,(p-1)/2\}$.
	Recall that $\dim_{\mathbb{F}_p}(V)=n=2m=2$ and that $U$ is a nontrivial summand of $V\wedge V$ as an $\mathbb{F}_p$-space,
	and then
	$$1\leq \dim_{\mathbb{F}_p}(U)\leq \dim_{\mathbb{F}_p}(V\wedge V)=\frac{n(n-1)}{2}=1.$$
	Observe that $|P:U|=|V|=p^{2}$, and so
	$P\cong E(p^{1+2})$ by Claim 2.
	Furthermore, if $d=p-1$, then $H=\cent{H}{U}\cong C_{p+1}$;
	if $d=(p-1)/2$, then $H\cong C_{2(p+1)}$ such that $|H:\cent{H}{U}|=2$ where $p>2$.

	\smallskip

	\noindent\emph{Claim 4. If Zsigmondy prime divisor of $p^n-1$ doesn't exist and $H$ is cyclic, then $P\cong E(p^{1+2})$ and $(a)$ holds.}

	\smallskip

	Since $H\cong HU/U$, another application of Lemma \ref{Frobcomp} to $G/U$ yields that
	either $H\cong C_{(p^2-1)/d}$ where $p\in \mathbb{M}$ or $H\cong C_{63}$ where $p^n=2^6$.
	Assume the latter.
	Then (\ref{21}) fails, a contradiction.
	So $H\cong C_{(p^2-1)/d}$ where $p\in \mathbb{M}$.
	As $|P:U|=|V|=p^2$, $P\cong E(p^{1+2})$ by Claim 2.
	By (\ref{21}),
	\[
		\frac{p-1}{d}~\Big|~ p+1,
	\]
	so it is routine to check that (a) holds.

	\smallskip

	\noindent\emph{Claim 5. If Zsigmondy prime divisor of $p^n-1$ doesn't exist and $H$ is noncyclic, then $P\cong E(p^{1+2})$ and $(b)$ holds.}

	\smallskip

	Note that $H\cong HU/U$.
	Another application of Lemma \ref{Frobcomp} to $G/U$ yields that $V\cong (C_p)^2$ and $H=Q\times D$ where $Q\in\syl{2}{H}$ is generalized quaternion and $D$ is either isomorphic to $C_{(p-1)/d}$ or $C_{(p-1)/2d}$.
	Since $|P:U|=|V|=p^2$, $P\cong E(p^{1+2})$ by Claim 2.
	Write $D=\la g\ra$.
	Since $o(g)\mid p-1$ and $V\cong (C_p)^2$,
	$$V=V_1\oplus V_2,$$
	where $V_i$ are irreducible $\la g\ra$-submodules of $V$.
	As $H=Q\times D$, $V$ is a homogenous $\la g\ra$-module by Clifford's theorem, so $g$ acts as scalar multiplication by $\varepsilon\in \mathbb{F}_p$ on $V$.
	Observe that $P$ is an extraspecial $p$-group of order $p^3$,
	and so $g$ acts as scalar multiplication by $\det (g)=\varepsilon^2$ on $U$.
	Since $\cent{\la g\ra}{V}=1$ and $o(g)$ is odd, $o(g)=o(\varepsilon)=o(\varepsilon^2)$,
	and hence $\cent{D}{U}=1$.
	Note that $|D|$ is a Hall number of $|G|$ such that
	$$|D|=|D:\cent{D}{U}|~\big|~|H:\cent{H}{U}|$$
	and that $|H|\mid |\cent{H}{U}|^2$ by Claim 2 as $P\cong E(p^{1+2})$,
	and then $|D|^2~\big|~ |G|$.
	Therefore $D=1$ and hence $H=Q$.
	Furthermore, either $d=p-1$ and $H\cong Q_{p+1}$ or $d=(p-1)/2$ and $H\cong Q_{2(p+1)}$.

	To reach the final conclusion of (b), it suffices to show that $H=\cent{H}{U}$.
	For $p\in \mathbb{M}$, observe that every irreducible cyclic subgroup with order dividing $p+1$ in $\GL(2,p)$ is contained in $\SL(2,p)$,
	and hence elements of order $p+1$, $(p+1)/2$ or $4$ (as none of the orders divide $p-1$) lie in $\SL(2,p)$.	
	Consequently, subgroups of $\GL (2,p)$ which are isomorphic to $Q_{p+1}$ or $Q_{2(p+1)}$ are contained in $\SL(2,p)$.
	As $H$ acts Frobeniusly on $V=P/\Phi(P)$, $H$ is isomorphic to a subgroup of $\Out (P)$.
	Since $P\cong E(p^{1+2})$ where $p\in\mathbb{M}$,
	$$\Out (P)=\cent{\Out(P)}{U}\rtimes C\cong \GL(2,p)$$
	where $\cent{\Out(P)}{U}\cong \SL(2,p)$ and $C\cong C_{p-1}$ acts Frobeniusly on $U$.
	So $H=\cent{H}{U}$ as $H$ being isomorphic to either $Q_{p+1}$ or $Q_{2(p+1)}$.
\end{proof}

\begin{lem}\label{W=1}
	Let a nontrivial group $H$ act faithfully and coprimely on a $p$-group $P$ and let $G=P\rtimes H$.
	Let $1\lhd  W\lhd U\lhd  P$ be a $G$-chief series of $P$ such that $U/W$ is cyclic.
	Assume that
	$U=P'$ is the unique maximal $G$-invariant subgroup of $P$ such that $|P:U|=p^2$ and that $P/U$ is $\cent{H}{U/W}$-irreducible.
	Then $p>2$, and $W$ is isomorphic to $P/U$ as a $\cent{H}{U/W}$-module.
\end{lem}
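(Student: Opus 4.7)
The strategy is to pin down the structure of $P$ first, then derive $W\cong P/U$ as $\cent{H}{U/W}$-modules via a commutator pairing, and finally rule out $p=2$ as a separate case.

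To begin, since $|P:U|=p^2$ and $U/W$ is a $G$-chief factor of cyclic order $p$, the quotient $P/W$ has order $p^3$ with derived subgroup $(P/W)'=U/W$; as $P$ is a $p$-group and $U/W\cong C_p$ is $G$-chief, the $p$-group $P$ acts trivially on $U/W$, forcing $U/W=Z(P/W)$, so $P/W$ is extraspecial of order $p^3$. A standard $p$-group orbit count shows $W$ has a nontrivial $P$-fixed-point subgroup, and $G$-irreducibility of $W$ then forces $W\leq Z(P)$. The alternating bilinear commutator form on $P$ induces a $G$-equivariant surjection $\wedge^2(P/U)\to U/[P,U]U^p$; together with $[P,U]\leq W$, $U^p\leq W$ (both from $U/W\leq Z(P/W)$ having exponent $p$), and $|U/W|=p$, this yields $W=[P,U]U^p$.

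I would next rule out $P$ having class $2$: if $\gamma_3(P)=1$ then $U=P'\leq Z(P)$, and since $P/U$ has exponent $p$ we have $x^p\in U\leq Z(P)$, so $[x^p,y]=1$; but in class $2$ also $[x^p,y]=[x,y]^p$, forcing $U^p=1$; combined with $[P,U]=\gamma_3(P)=1$ this gives $W=1$, a contradiction. Since $\gamma_4(P)\leq[W,P]=1$, $P$ has class exactly $3$ and $W=\gamma_3(P)$ by $G$-irreducibility of $W$. I would also note that $U$ is abelian: $U'\leq[P,U]=W$ is $G$-invariant, so $U'\in\{1,W\}$ by irreducibility; if $U'=W$, then $U/U'\cong C_p$ would force $U$ cyclic by the Burnside basis theorem, contradicting $U'\neq 1$. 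With $U$ abelian and $W\leq Z(P)$, the commutator map $P\times U\to W$ is bilinear, vanishes on $U\times U$ and on $P\times W$, and descends to a surjective $G$-equivariant map $P/U\otimes U/W\to W$. Since $P/U$ is $G$-irreducible and $U/W$ is $1$-dimensional, the tensor product is an irreducible $G$-module, so the surjection is an isomorphism; restricting to $C=\cent{H}{U/W}$, which acts trivially on $U/W$, gives $W\cong P/U$ as $C$-modules.

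The main obstacle is showing $p>2$. Suppose $p=2$, so $P/W$ is extraspecial of order $8$, either $D_8$ or $Q_8$. A coprime-action argument (any $h\in H$ with $[h,P]\leq W$ induces a homomorphism $\delta\colon P\to W$ factoring through $P/U$ with $\sigma_h^n(x)=x\delta(x)^n$, whence $n=|h|$ coprime to $p$ forces $\delta=0$) shows $H$ acts faithfully on $P/W$. In the $D_8$ case, $|\Aut(D_8)|=8$ is a $2$-group, so $|H|$ coprime to $2$ forces $H=1$, contradicting its nontriviality. In the $Q_8$ case, lift the standard generators $i,j$ to $\tilde i,\tilde j\in P$; since $i^2=j^2=-1\in Z(Q_8)$, both $\tilde i^2$ and $\tilde j^2$ lie in $U\setminus W$, the unique nontrivial coset of $W$ in $U$. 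Hence any $u\in U\setminus W$ may be written as $u=\tilde i^2w=\tilde j^2w'$ with $w,w'\in W$, yielding $[\tilde i,u]=[\tilde i,\tilde i^2w]=1$ and similarly $[\tilde j,u]=1$. As $P=\langle\tilde i,\tilde j\rangle$ by Burnside basis ($d(P)=\dim P/U=2$) and $[U,U]=1$, the homomorphism $x\mapsto[x,u]$ from $P$ to $W$ vanishes on a generating set, hence on $P$; so $[P,u]=1$ for every $u\in U\setminus W$, forcing $[P,U]=1$ and $W=1$, the final contradiction.
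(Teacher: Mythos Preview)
Your argument is essentially correct and parallels the paper's in the main step: both establish $W\le Z(P)$, identify $W$ with $[P,U]$, and then use the commutator pairing to produce the $C$-isomorphism $W\cong P/U$. The paper does this by fixing a generator $u$ of $U/W$ and checking that $x\mapsto[x,u]$ is a $C$-epimorphism $P/U\to W$, hence an isomorphism by $C$-irreducibility; your tensor formulation $P/U\otimes U/W\twoheadrightarrow W$ is an equivalent packaging.

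Where you genuinely diverge is in excluding $p=2$. The paper observes that $W\le P'\cap Z(P)$ forces $W$ to be a quotient of the Schur multiplier $\mathrm{M}(P/W)$; since for $p=2$ one has $P/W\cong Q_8$ (the $D_8$ case is ruled out because $D_8$ has a characteristic $C_4$, whose preimage would be a $G$-invariant subgroup of index $2$ violating the uniqueness of $U$), and $\mathrm{M}(Q_8)=1$, this kills $W$ immediately. Your route is more elementary but longer: you handle $D_8$ via $|\Aut(D_8)|=8$ and the faithful coprime action of $H$ on $P/W$, and $Q_8$ via an explicit commutator computation showing $[P,U]=1$, contradicting the class-$3$ conclusion you had already drawn. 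Both are valid; the paper's Schur-multiplier shortcut is cleaner, while yours avoids quoting homological facts.

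One small gap to patch: your justification that $H$ acts faithfully on $P/W$ asserts $\sigma_h^n(x)=x\,\delta(x)^n$, but this identity needs $h$ to centralize $\delta(x)\in W$, which you have not established. The conclusion is nonetheless correct and follows more directly: since $U=P'=\Phi(P)$ (uniqueness of $U$ forces $\Phi(P)\le U$, and $P'\le\Phi(P)$ always), the standard coprime-action fact $\cent{H}{P}=\cent{H}{P/\Phi(P)}=\cent{H}{P/U}$ gives $\cent{H}{P/W}\le\cent{H}{P/U}=\cent{H}{P}=1$.
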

\begin{proof}
	Note that $1\lhd W\lhd U=P'\lhd P$ is a $G$-chief series of $P$.
	Then $W\leq P'\cap\z{P}$ and hence $W$ is isomorphic to a quotient of the Schur multiplier $\mathrm{M}(P/W)$.

	Now, we claim that $p>2$. In fact, if $p=2$, then $P/W$ is a special $p$-group such that $|P/W:(P/W)'|=4$; by Lemma \ref{specialextra} $P/W\cong Q_8$ as $P'$ being the unique maximal $G$-invariant subgroup of $P$; this implies a contradiction $1<|W|\le |\mathrm{M}(P/W)|=|\mathrm{M}(Q_8)|=1$.

	We next show that $W=\z{P}$.
	Otherwise, the uniqueness of $U=P'$ yields $U=\Phi(P)=P'=\z P$.
	Since $|P:U|=p^2$, by Lemma \ref{specialextra} $U=P'\cong C_p$, a contradiction.

	So $W=\z P$, and hence $W=\z P=[P,P']=[P,U]$.
	Notice that $U/W$ is cyclic, and then $U$ is abelian.
	Let $C=\cent{H}{U/W}$.
	Fix an element $u $ of $U-W$ with $\la uW\ra=U/W$,
	and define a map $\sigma:P/U\rightarrow W$ by setting $\sigma(x)=[x,u]$ for $x\in P/U$.
	Since $W=\z P=[P,U]$ and $U/W=\la uW\ra$,
	it is routine to check that $\sigma$ is a $C$-epimorphism.
	As $P/U$ is $C$-irreducible, $\sigma$ is a $C$-isomorphism by Schur's lemma.
\end{proof}

\section{Proof of the main theorem}

If $G$ is a finite group of order dividing $n$, then the Galois group $\mathfrak{G}=\gal{\mathbb{Q}_n}{\mathbb{Q}}$
of the $n$-th cyclotomic extension naturally acts on $\irr{G}$ via
\[\chi^{\mathfrak{g}}(g)=\chi(g)^{\mathfrak{g}}\]
for $\chi\in\irr G$, $\mathfrak{g}\in\mathfrak{G}$, and $g\in G$.
We say two characters $\alpha,\beta\in\irr G$ are Galois conjugate if there exists some $\mathfrak{g}\in\mathfrak{G}$
such that $\alpha=\beta^{\mathfrak{g}}$.
And $\alpha,\beta\in\irr{G}$ are Galois conjugate if and only if
there exists an $\mathfrak{a}\in\gal{\mathbb{Q}(\alpha)}{\mathbb{Q}}$ such that $\alpha=\beta^{\mathfrak{a}}$
where $\mathbb{Q}(\alpha)$ is \emph{the field of values} of $\alpha$.
We also note that if $\alpha,\beta \in \irr{G}$ are Galois conjugate, then $\mathbb{Q}(\alpha )=\mathbb{Q}(\beta)$ and $\ker(\alpha)=\ker(\beta)$, i.e., they share the same field of values and
kernel.

\begin{pro}\label{sol}
	Let $G$ be a group such that $\irrs{G}$ forms a Galois conjugacy class. Then $G$ is solvable.
\end{pro}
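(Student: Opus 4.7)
The plan is to argue by contradiction: suppose $G$ is a non-solvable counterexample of minimum order. If $\irrs{G}$ were empty then $G$ would be nilpotent by the Gagola--Lewis theorem, so $\irrs{G}$ is a nonempty single Galois orbit. Since Galois-conjugate irreducible characters share both their kernel and their degree, every $\chi\in\irrs{G}$ has a common kernel $K\lhd G$ and common degree $d$; both invariants will drive the reduction that follows.

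The heart of the argument is to analyse how the hypothesis interacts with normal quotients. For any $N\lhd G$, inflation identifies $\irr{G/N}$ with $\{\chi\in\irr{G}:N\le\ker\chi\}$, and since both $\chi(1)$ and $|G:\ker\chi|$ are preserved under inflation, one checks that $\irrs{G/N}=\{\chi\in\irrs{G}:N\le\ker\chi\}$. A clean dichotomy follows: if $N\le K$ then $\irrs{G/N}=\irrs{G}$ is still a single Galois orbit in $\irr{G/N}$, so minimality of $|G|$ forces $G/N$ to be solvable; while if $N\not\le K$ then $\irrs{G/N}=\emptyset$, so $G/N$ is nilpotent by Gagola--Lewis and again solvable. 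Hence every nontrivial normal quotient of $G$ is solvable. Consequently $G$ has a \emph{unique} minimal normal subgroup $M$---two distinct such $N_1,N_2$ would embed $G$ into the solvable product $G/N_1\times G/N_2$---and $M$ cannot be abelian, else $M$ and $G/M$ would both be solvable. Therefore $M\cong T^k$ for some non-abelian simple $T$. Finally, $\cent{G}{M}\lhd G$ meets $M$ trivially (as $\z{M}=1$), so uniqueness of $M$ forces $\cent{G}{M}=1$, and $G$ embeds into $\Aut(M)=\Aut(T)\wr S_k$.

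What remains---and this is the step I expect to be the main obstacle---is to rule out every such almost-simple-type candidate $G$ with socle $T^k$. My strategy is to exhibit two characters in $\irrs{G}$ of distinct degrees, which would then lie in different Galois orbits and contradict the hypothesis. For $k=1$, a natural pair combines the Steinberg-type character of $T$---whose degree is a prime power satisfying $\chi(1)^2\nmid|G|$---with another irreducible character of $T$, of a different degree, that extends to $\Aut(T)$; the existence of such an extendable character is a known consequence of the classification of finite simple groups. For $k>1$ one tensors component characters and applies Clifford theory inside $\Aut(T)\wr S_k$. The genuinely hard part is carrying out this verification uniformly over all non-abelian simple $T$, as it unavoidably appeals to CFSG; once this is done, no minimal counterexample can exist and the proposition follows.
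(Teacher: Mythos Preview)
Your reduction to a unique non-abelian minimal normal subgroup $M\cong T^k$ with $G/M$ solvable and $\cent{G}{M}=1$ is correct and is exactly what the paper does. The divergence is entirely in the endgame, where you leave a genuine gap.

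Your proposed mechanism---Steinberg character plus a second extendible character of different degree---is too vague to count as a proof, and parts of it are problematic. Alternating and sporadic $T$ have no Steinberg character, so for those you would need an ad hoc replacement. Even for $T$ of Lie type, you must argue that an irreducible of $G$ lying over the Steinberg character still satisfies $\chi(1)^2\nmid|G:\ker\chi|$, which is a statement about $|\Aut(T)\wr S_k|_p$ rather than about $|T|_p$; and you must locate a second extendible character of \emph{different} degree with the same defect, which is not a single clean citation. As written, this step is a plan, not a proof.

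The paper's execution is both different and cleaner. Rather than exhibiting two characters of different degrees directly, it invokes lemmas of Qian (valid for every non-abelian simple $S$ except $A_7,A_{11},A_{13},M_{22}$) that produce, for \emph{each} prime $p\in\pi(S)$, a character $\psi_p\in\Irr(E)$ whose $p$-part squared exceeds $|\Aut(E)|_p$, and then a $\chi_p\in\Irr(G\mid\psi_p)$ with $\chi_p(1)^2\nmid|G|$. Since all the $\chi_p$ lie in $\irrs{G}$ they share a common degree, forcing the common value $\psi_p(1)^2$ to exceed $|E|_q$ for every $q\in\pi(S)$ simultaneously, hence $\psi_p(1)^2>|E|$, which is absurd. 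The four exceptional simple groups are then dispatched by reading off two $\ell$-defect-zero characters of distinct degrees from the ATLAS. This avoids any discussion of Steinberg characters or extendibility and handles all $k\ge 1$ uniformly. If you want to complete your argument, the cleanest route is to replace your Steinberg sketch with these citations.
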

\begin{proof}
	Let $\chi_0\in\irrs{G}$ and $K=\ker(\chi_0)$.
	As $\irrs{G}$ forms a Galois conjugacy class, $\ker(\chi)=K$ for $\chi\in\irrs{G}$.
	We now proceed by induction to show the solvability of $G$.
	Notice that for each quotient $G/N$ of $G$, either $\irrs{G/N}=\varnothing$ or $\irrs{G/N}$ forms a Galois conjugacy class.
	So, by \cite[Theorem A]{gagolalewis1999} and induction, we may assume that $G$ admits the unique
	minimal normal subgroup, say $E$, such that $G/E$ is solvable.
	Also, we further assume that $E=E_1\times \cdots\times E_t$ where $E_i\cong S$ for a nonabelian simple group $S$.
	We next show the assumption that $S$ is nonabelian simple implies a contradiction.

	Assume that $S\notin\{A_7,A_{11},A_{13},M_{22}\}$.
	Let $p\in\pi(S)$.
	By \cite[Lemma 2.4]{qian2012 p-closed}, there exists some $\varphi_{pi}\in\irr {E_i}$ such that $(\varphi_{pi}(1)^2)_p>|\Aut(E_i)|_p$.
	Since $G$ acts transitively on $\{E_1,\dots,E_t\}$, we may assume that $\varphi_{p1},\dots,\varphi_{pt}$ are $G$-conjugate.
	Write $\psi_p=\varphi_{p1}\times \cdots\times \varphi_{pt}$.
	Then $\psi_p\in\irr E$.
	Applying \cite[Lemma 2.6]{qian2012 p-closed} to $G$, one has that there exists a faithful character $\chi_p\in \irr{G}$ lies over $\psi_{p}$ such that $\chi_p(1)^2\nmid |G|$.
	That is $\chi_p\in\irrs{G}$ for each $p\in\pi(S)$.
	Observe that $\irrs{G}$ forms a Galois conjugacy class.
	Hence, each pair of characters in $\{\chi_p:p\in\pi(S)\}$ are Galois conjugate and therefore every character in
	$\{\psi_{p}:p\in\pi(S)\}$ shares the same degree.
	Since the $q$-part of the integer $\psi_{p}(1)^2$ exceed $|E|_q$ for all $q\in \pi(S)$,
	we conclude that $\psi_{p}(1)^2>|E|$ which contradicts $\psi_p\in\irr E$.

	Assume that $S\in\{A_7,A_{11},A_{13},M_{22}\}$.
	Let $\ell=\max(\pi(S))$.
	By checking \cite{atlas}, $E_{i}$ has two $\ell$-defect zero irreducible characters $\alpha_{i}$ and $\beta_{i}$ which have distinct degrees.
	Let $\alpha=\alpha_1\times\cdots\times\alpha_t$ and $\beta=\beta_1\times \cdots\times \beta_t$, and let $\chi,\omega\in\irr{G}$ be characters lie over $\alpha$ and $\beta$ respectively.
	Then $\chi(1)_\ell\geq |E|_\ell$ and $\omega(1)_\ell\geq |E|_\ell$.
	Since by \cite[Proposition 2.5]{qian2012 p-closed} $|E|_\ell>|G/E|_\ell$,
	$\chi(1)^2\nmid |G|$ and $\omega(1)^2\nmid |G|$,
	and so $\chi,\omega\in\irrs{G}$.
	However, $\alpha(1)\neq \beta(1)$, a contradiction.
\end{proof}

By Lemma \ref{sol}, it remains to classify solvable groups $G$ in which $\irrs{G}$ forms one Galois conjugacy class.
For convenience, we introduce the next hypothesis.

\begin{hy}\label{hy}
	Let $G$ be a solvable group and $P=G^{\infty}$ its nilpotent residue,
	and let $\chi_0\in\irrs{G}$, $K=\ker(\chi_0)$ and $U=P\cap K$.
	Assume that $\mathfrak{G}=\gal{\mathbb{Q}(\chi_0)}{\mathbb{Q}}$ acts transitively on $\irrs{G}$.
\end{hy}

Let $G$ be a group and $N$ its normal subgroup, and let $\theta\in \irr{N}$. We use $\irr{G|\theta}$ to denote the set of irreducible characters of $G$ lying over $\theta$ and $\irr{G|N}$ to denote the complement of the set $\irr{G/N}$ in the set $\irr{G}$.

\begin{lem}\label{roughstructureofG}
	Assume the setting of Hypothesis \ref{hy}.
	Then $G=P\rtimes H$ where $P\in \syl{p}{G}$ and $H$ is nilpotent subgroup of $G$; $K=\cent{H}{P}\times U$ is nilpotent where $U=\Phi(P)=P'$ is the unique maximal $G$-invariant subgroup of $P$.
\end{lem}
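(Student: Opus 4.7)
The plan is to induct on $|G|$, using that Galois conjugates share kernels so every $\chi \in \irrs{G}$ has $\ker\chi = K$; consequently, for any $G$-normal subgroup $M \leq K$, Hypothesis~\ref{hy} descends to $G/M$ (with $\irrs{G/M}$ in natural bijection with $\irrs{G}$ and still forming one Galois class). After recording the basic observations that $G/P$ being nilpotent forces $\irr{G/P} \subseteq \irrn{G}$ (hence $P \not\leq K$ and $U = P \cap K < P$), and that $[K,P] \leq K \cap P = U$ (giving $K/U \leq \cent{G/U}{P/U}$), I would reduce inductively: whenever $U \neq 1$, pick a minimal normal subgroup $M$ of $G$ with $M \leq U$ and apply induction to $G/M$. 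The structural conclusions lift routinely, the only subtle point being that $M$ must be a $p$-group for the prime $p$ attached to $P/M$; this is forced by the uniqueness of $U/M$ as the maximal $G/M$-invariant subgroup of $P/M$, which rules out a mixed-prime decomposition of $P$.

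The main obstacle is the base case $U = 1$, where $\chi_0|_P$ is faithful and one must show from scratch that $P$ is a $p$-group. Here the Clifford decomposition $\chi_0|_P = e\sum_i \theta_i$ has trivial common kernel, and I would exploit the transitivity of $\mathfrak{G}$ on $\irrs{G}$ together with the Galois action on $\irr{P}$: if $|P|$ had two distinct prime divisors, a careful analysis of the interplay between the Sylow structure of $P$ and the Galois action on the constituents $\{\theta_i\}$ would produce characters in $\irrs{G}$ outside the Galois orbit of $\chi_0$, contradicting the hypothesis. Once $P$ is a $p$-group, coprimality with the nilpotent $G/P$ makes it a Sylow $p$-subgroup, and Schur--Zassenhaus yields $G = P \rtimes H$ with $H \cong G/P$ nilpotent.

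For the remaining claims, irreducibility of $P/U$ as a $G/U$-module follows from the single-Galois-class hypothesis via Clifford theory, yielding uniqueness of $U$ as the maximal $G$-invariant subgroup of $P$ and the equalities $U = \Phi(P) = P'$. Hall's theorem then gives $K = U \cdot (K \cap H)$; coprime action combined with $[K,P] \leq U$ forces $K \cap H \leq \cent{H}{P}$ (since a $p'$-group acting trivially on $P/\Phi(P)$ acts trivially on $P$); and the reverse inclusion $\cent{H}{P} \leq K$ is obtained by a Gallagher-type argument, showing that any linear character of $G/K$ nontrivial on $\cent{H}{P}/(\cent{H}{P}\cap K)$ would, when multiplied by $\chi_0$, yield an element of $\irrs{G}$ outside the Galois orbit of $\chi_0$. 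Nilpotence of $K = \cent{H}{P} \times U$ is then immediate.
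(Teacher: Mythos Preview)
Your outline has the right skeleton, but there is a genuine gap at the step $U=P'$. From the uniqueness of $U$ and complete reducibility of $P/\Phi(P)$ under the coprime $H$-action you do get $U=\Phi(P)$, and $P'\le U$ is immediate since $P/U$ is a chief factor; but the reverse inclusion $U\le P'$ does \emph{not} follow from ``irreducibility of $P/U$ via Clifford theory.'' In the paper this requires a separate minimal-counterexample argument: one reduces to $P'=1$ with $U$ minimal normal, so $P$ is abelian of exponent $p^2$ with $U=\Phi(P)=\Omega_1(P)$, and then the $p$-th-power map gives an $H$-isomorphism $P/U\to U$. Now Isaacs's large-orbit lemma (Lemma~\ref{isaacs}) produces $\lambda\in\irr{P/U}$ and $\mu\in\irr{U}$ with the same (small) inertia group, so characters of $G$ above each lie in $\irrs G$ but have different kernels, contradicting the single-Galois-class hypothesis. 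Your proposal never invokes Lemma~\ref{isaacs}, and without it this step does not go through.

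There is a second soft spot in your base case $U=1$. The paper's organization avoids your difficulty by establishing the uniqueness of $U$ \emph{first} (directly: any proper $G$-invariant $N<P$ has $G/N$ nonnilpotent, hence some $\omega\in\irrs{G/N}$ exists and $N\le\ker\omega=K$, so $N\le U$); once that is in hand, $U=1$ forces $P$ minimal normal, hence an elementary abelian $p$-group automatically. What remains nontrivial---and what your sketch does not address---is that $P$ is a \emph{Sylow} $p$-subgroup, i.e.\ that the nilpotent complement $H$ has trivial $p$-part. The paper handles this by writing $G=(P\rtimes E)\times D$ with $D\in\syl p H$, picking $\varphi\in\irrs{PE}$, and showing that every $\varphi\times\mu$ with $\mu\in\irr{D/D'}$ lies in $\irrs G$, which forces $D\le K$ and hence $D=1$. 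Your appeal to ``the interplay between the Sylow structure of $P$ and the Galois action on the constituents'' is not a substitute for this argument; nor is your inductive lifting when $U>1$ routine (in the paper that case also leans on Lemma~\ref{isaacs}, applied to the action of $P/\fitt P$ on $\fitt P/\Phi(P)$).
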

\begin{proof}
	Since $\mathfrak{G}$ acts transitively on $\irrs{G}$, every character in $\irrs{G}$ shares the same degree and kernel.
	Note that $G/N$ is nonnilpotent for every proper $G$-invariant subgroup $N$ of $P$ as $P=G^{\infty}$,
	and then there exists an $\omega\in\irrs G$ such that $N\leq P\cap \ker(\omega)=P\cap K=U$.
	So, $U$ is the unique maximal $G$-invariant subgroup in $P$.

	We now claim that $P$ is a Sylow $p$-subgroup of $G$.
	If not, let $G$ be a counterexample of minimal order.
    Assume first that $U>1$.
	Notice that $P/U=G^{\infty}/U=(G/U)^\infty$ and that $\irrs{G/U}$ forms a Galois conjugacy class,
	and so the minimality of $G$ implies that $P/U\in \syl{p}{G/U}$, in particular, $|G:P|$ is a $p'$-number.
    Since $P$ possesses the unique maximal $G$-invariant subgroup $U$, $P$ is nonnilpotent such that
	$$1<\fitt P\leq U<P.$$
	Observe that $G/\fitt{P}$ and $P/\fitt{P}$ satisfies Hypothesis \ref{hy},
	and hence $P/\fitt{P}\in \syl{p}{G/\fitt{P}}$ by the minimality of $G$.
	Also
	\[
		P/\Phi(P)\cong \fitt{P}/\Phi(P)\rtimes P/\fitt{P},
	\]
	where $\fitt{P}/\Phi(P)$ is a faithful completely reducible $P/\fitt{P}$-module (possibly of mixed characteristic).
	As $P$ is nonnilpotent, $\fitt{P}/\Phi(P)$ is not a $p$-group.
	If $p\mid|\fitt{P}/\Phi(P)|$, then $\fitt{P}/\Phi(P)=A/\Phi(P)\times B/\Phi(P)$
	where $A$ and $B$ are distinct $G$-invariant subgroups of $U$,
    and then $P/A$ and $P/B$ are $p$-groups by the minimality of $G$. 	
    Therefore, $P/\Phi(P)$ is a $p$-group as $P/\Phi(P)$ can be embedded in $P/A\times P/B$, a contradiction.
	Notice that $P/\fitt{P}$ is a $p$-group,
	and then $\fitt{P}/\Phi(P)\in\mathrm{Hall}_{p'}(P/\Phi(P))$.
	So, by Lemma \ref{isaacs} there exists a $\theta\in\irr{\fitt{P}/\Phi(P)}$ such that
	\[
		|P:\inert{P}{\theta}|>|P/\fitt{P}|^{\frac{1}{2}}.
	\]
	Take $\omega\in\irr{G/\Phi(P)\mid\theta}$, and note that $|P:\inert {P} {\theta} |$ is a $p$-number as $\fitt{P}\le \inert {P} {\theta} $, so
	\[
		(\omega(1)_p)^2 \geq (|G:\inert G \theta  |_p)^{2}\geq |P:\inert{P}{\theta}|^2>|P/\fitt{P}|=|G|_p,
	\]
	where the second inequality holds as $P \unlhd G$, and $\fitt{P}\nleq \ker(\omega)$, a contradiction to $\ker(\omega)=K \geq \fitt{P}$.
    So, we may assume that $U=1$.
	In other words, $P$ is minimal normal in $G$. 
	As $P=G^{\infty}$, $P \cap  \Phi(G)=1$, and hence $G=P\rtimes H$ where $H$ is nilpotent.
	Let $D\in\syl{p}{H}$ and let $E$ be its normal complement in $H$.
	Then $G=(P\rtimes E)\times D$ where $G/D\cong PE$ is nonnilpotent, so $D \leq K$, $P$ is minimal normal in $PE$ and
	there exists a character $\varphi\in \irrs{PE}$ by \cite[Theorem A]{gagolalewis1999}.
	Furthermore, $\ker(\varphi)\le E$ (as $\ker(\varphi)\cap P=1$) and $\varphi(1)$ is a $p'$-number (as $P$ being abelian normal in $PE$) such that $\varphi(1)^2\nmid |E:\ker(\varphi)|$.
	Let $\mu\in \irr{D/D'}$, to conclude a contradiction, it suffices to show $\mu=1_D$ as $D$ being a nontrivial $p$-group.
	To see this, let $\chi=\varphi\times \mu$, and note that $\chi$ is an irreducible character of $G$ such that $\chi(1)=\varphi(1)$ and $\ker(\chi )\cap PE=\ker(\varphi)$, and so
	by calculation $\chi\in \irrs{G}$, and this implies that $\mu=1_D$ because $D\leq K=\ker(\chi)$, as required.

	Thus $P\in\syl{p}{G}$ and so $G=P\rtimes H$ where $H\in\mathrm{Hall}_{p'}(G)$ is nilpotent.
	Let $C=H \cap K$.
	We next show that $K=C\times U$ is nilpotent where $U=\Phi(P)$, and $C=\cent{H}{P}$.
	Observe that $K\unlhd G$ and that $P\in\syl{p}{G}$, and then $U=P\cap K\in\syl pK$ and $C=H\cap K\in\mathrm{Hall}_{p'}(K)$,
	i.e. $K=U\rtimes C$ where $U\in\syl p K$.
	Clearly, $\Phi(P)\leq U$.
	Since $H$ acts coprimely on $P/\Phi(P)$, $U=\Phi(P)$ by the uniqueness of $U$.
	Since
	$$[C,P]\leq [K,P]\leq K\cap P=U=\Phi(P),$$
	$[C,P/\Phi(P)]=1$.
	As $C$ acts coprimely on $P$, $[C,P]=1$ and $K=C\times U$ is nilpotent.
	Since $K$ is nilpotent with $C$ being its Hall $p'$-subgroup,
	one has $C\unlhd G$.
	To see $C=\cent{H}{P}$, it remains to show that $\cent{H}{P}\leq K$ as $C=H\cap K$ and $[C,P]=1$.
	Recall that $P=G^{\infty}$ and $\cent{H}{P}\cap P=1$, and hence $G/\cent{H}{P}$ is a nonnilpotent quotient of $G$.
	It follows by \cite[Theorem A]{gagolalewis1999} that there exists a $\chi\in\irrs{G/\cent{H}{P}}$.
	Thus $\cent{H}{P}\leq \ker(\chi)=K$.

	Finally, we show that $U=P'$.
	If not, let $G$ be a minimal counterexample.
	Notice that for each $G$-invariant proper subgroup $N$ of $U$ such that $G/N$ is nonnilpotent, $G/N$ satisfies the hypothesis of this lemma.
	So, to get a contradiction, we may assume that $P'=1$ and also that $U$ is minimal normal in $G$.
	That is
	$$U=\Phi(P)=\Omega_1(P),$$
	where $\Omega_{1}(P)=\{x\in P:x^p=1\}$.
	Since $G/\cent{H}{P}$ also satisfies the hypothesis of this lemma, we may keep assuming that $\cent{H}{P}=1$.
	As $\cent{H}{P/U}=\cent{H}{P/\Phi(P)}=\cent{H}{P}=1$, by Lemma \ref{isaacs} there exists a $\lambda\in\irr{P/U}$ such that
	\[
		|H:\inert{H}{\lambda}|^2\nmid |H|.
	\]
	Observe that the $p$-power map is an $H$-isomorphism from $P/U$ to $U$.
	Therefore, there exists a nonprinciple character $\mu\in\irr{U}$ such that
	$\inert{H}{\mu}=\inert{H}{\lambda}$.
	Let $\chi\in\irr{G|\lambda}$ and $\omega\in\irr{G|\mu}$.
	Since $P\in\syl{p}{G}$ is abelian normal in $G$, $\chi(1)$ and $\omega(1)$ are both coprime to $p$.
	By Clifford's theorem and calculation, $\chi,\omega\in\irrs{G}$.
	However, as $U\leq\ker(\chi)$ and $U\cap \ker(\omega)=1$, $\ker(\chi)\neq \ker(\omega)$,
	a contradiction.
\end{proof}

Let $V$ be an elementary abelian $p$-group and $\mathfrak{T}=\gal{\mathbb{Q}_p} {\mathbb{Q}} $, and let $\xi=e^{2\pi\sqrt{-1}/p}$.
It is well-known that $\mathfrak{T}= \la \mathfrak{t}\ra\cong C_{p-1}$ such that $\xi ^{\mathfrak{t}}=\xi ^{k}$ where $k$ is a primitive element of $\mathbb{F}_{p}$.
For $v\in V$, $v^\mathfrak{t}$ is defined to be the element of $V$ such that
\[
	\alpha(v^\mathfrak{t})=\alpha^\mathfrak{t}(v)=(\alpha(v))^\mathfrak{t}
\]
for all $\alpha\in \irr{V}$.
Since $(\alpha(v))^\mathfrak{t}=(\alpha(v))^{k}$ for all $\alpha\in \irr{V}$ and $v\in V$, we have that $\alpha^\mathfrak{t}=\alpha^{k}$ and $v^{\mathfrak{t}}=v^{k}$ for all $\alpha\in \irr{V}$ and $v\in V$,
i.e. $\mathfrak{T}=\z{\GL (V)}$ when we identify $\mathfrak{T}$ as a subgroup of $\GL(V)$.

Recall that for a group $G$, $G^\sharp$ stands for the set of nontrivial elements of $G$.

\begin{lem}\label{K=1}
	Assume the setting of Hypothesis \ref{hy}.
	If $K=1$, then $G=V\rtimes H$ is a Frobenius group such that kernel $V\cong (C_p)^n$ is minimal normal in $G$ and complement $H$ is nilpotent of order $(p^n-1)/d$ where $d=|\irrs{G}|$ divides $p-1$;
	the field of values $\mathbb{Q}(\chi_0)$ is a subfield of $\mathbb{Q}_p$.
	Furthermore, if we identify $H$ as a subgroup of $\GL(V)$, then $H\z{\GL(V)}$ acts transitively on $V^\sharp$.
\end{lem}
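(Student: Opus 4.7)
Under Hypothesis~\ref{hy}, Lemma~\ref{roughstructureofG} gives $G=P\rtimes H$ with $H$ nilpotent and $K=\cent{H}{P}\times U$, where $U=\Phi(P)=P'$ is the unique maximal $G$-invariant subgroup of $P$. The assumption $K=1$ then forces $\cent{H}{P}=1$ and $U=1$, so $V:=P$ is elementary abelian of order $p^n$, minimal normal in $G$, and $H$ acts faithfully and irreducibly on $V$; in particular $V$ is the unique minimal normal subgroup of $G$ and $V=[V,H]\leq G'$. I would immediately deduce that every $\chi\in\irr{G|V}$ is faithful (from $\ker\chi\cap V=1$ and $[\ker\chi,V]=1$ forcing $\ker\chi\leq\cent{G}{V}=V$), so $\irrs{G}\subseteq\irr{G|V}$; and that no nontrivial $\lambda\in\irr{V}$ is $H$-invariant, for otherwise $\lambda$ would extend to a linear character of $G$, contradicting $V\leq G'$.

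\textbf{Making $G$ Frobenius.} The crux is to prove $\inert{H}{\lambda}=1$ for every $\lambda\in\irr{V}^\sharp$. I plan to apply Lemma~\ref{Frob}; by Brauer's permutation lemma it suffices to show, for each such $\lambda$, that either $\inert{H}{\lambda}=1$ or $|H|\mid|\inert{H}{\lambda}|^2$. Clifford--Gallagher describes $\irr{G|\lambda}$ as $\{(\tilde\lambda\eta)^G:\eta\in\irr{\inert{H}{\lambda}}\}$, with degrees $[H:\inert{H}{\lambda}]\eta(1)$. When $\inert{H}{\lambda}=1$ the unique such character $\lambda^G$ has degree $|H|$ and automatically lies in $\irrs{G}$ because $|H|>1$ implies $|H|^2\nmid|H|$. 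Thus, once any $\lambda$ is regular, the common degree of all members of $\irrs{G}$ equals $|H|$, and for any other $\lambda$ with $\inert{H}{\lambda}>1$ the character $\tilde\lambda^G$ has degree strictly less than $|H|$; it cannot be Galois conjugate to $\chi_0$, so, being faithful and in $\irr{G|V}$, it is in $\irrn{G}$, giving $[H:\inert{H}{\lambda}]^2\mid|H|$, i.e.\ $|H|\mid|\inert{H}{\lambda}|^2$. Lemma~\ref{Frob} then applies and produces the Frobenius structure. The principal obstacle is excluding the scenario in which every $\lambda\in\irr{V}^\sharp$ is non-regular, where the above degree comparison does not get started; I would handle this by exploiting the Galois equivariance of the Clifford projection $\chi\mapsto[\lambda]_H$ and the free transitive action of $\mathfrak{G}$ on $\irrs{G}$ to show that the resulting combinatorics of orbit sizes, $|\irr{\inert{H}{\lambda}}|$, and common character degree is incompatible with the nilpotency of $\inert{H}{\lambda}$.

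\textbf{Completing the proof.} Once $G$ is Frobenius, $\irr{G|V}$ is in bijection with the $H$-orbits of $\irr{V}^\sharp$; there are $(p^n-1)/|H|$ of them, each giving a character of degree $|H|$, all lying in $\irrs{G}$. Hence $d=(p^n-1)/|H|$, so $|H|=(p^n-1)/d$. Since $\chi_0=\lambda_0^G$ with $\lambda_0$ taking values in $\mathbb{Q}_p$, induction gives $\mathbb{Q}(\chi_0)\subseteq\mathbb{Q}_p$, and the $\mathfrak{G}$-action on $\irrs{G}$ factors through $\mathfrak{T}=\gal{\mathbb{Q}_p}{\mathbb{Q}}\cong\z{\GL(V)}$. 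Transitivity on the $d$ orbits then forces $d\mid|\mathfrak{T}|=p-1$. Finally, under the identification $\mathfrak{T}=\z{\GL(V)}$ supplied in the paragraph preceding the lemma, the transitivity of $\mathfrak{T}$ on $H$-orbits of $\irr{V}^\sharp$ is the same as transitivity of $H\z{\GL(V)}$ on $\irr{V}^\sharp$, which by duality of the $H\z{\GL(V)}$-actions on $V$ and $\irr{V}$ is equivalent to the asserted transitivity on $V^\sharp$.
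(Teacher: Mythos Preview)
Your setup and your ``completing the proof'' paragraph are fine and essentially match the paper. The gap is exactly where you flag it: the case in which no $\lambda\in\irr{V}^\sharp$ is $H$-regular. Your proposed resolution---``Galois equivariance of the Clifford projection'' plus unspecified ``combinatorics of orbit sizes''---is not a proof, and I do not see how to make it one without essentially reproducing the paper's argument. Note also that your degree-comparison trick in the ``some $\lambda$ regular'' case really lives at the level of a \emph{single} $\lambda$: what you need is that for each $\lambda$ either $\inert{H}{\lambda}=1$ or some $\chi\in\irr{G\mid\lambda}$ lies in $\irrn{G}$ (whence $[H:\inert{H}{\lambda}]^2\mid|H|$). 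The hard case is therefore ``$\irr{G\mid\lambda}\subseteq\irrs{G}$'', and that is precisely where the paper supplies a concrete idea you are missing.

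Here is what the paper does in that case. Let $T=\inert{G}{\lambda}$ and take the canonical (coprime) extension $\mu\in\irr{T}$ of $\lambda$ with $o(\mu)=o(\lambda)=p$. For any $\beta\in\irr{T/V}$, Gallagher gives $(\mu\beta)^G,\mu^G\in\irr{G\mid\lambda}\subseteq\irrs{G}$, so they are Galois conjugate; equating degrees forces $\beta(1)=1$, hence $T/V$ is abelian. Now $(\mu\beta)^G=(\mu^G)^{\mathfrak a}$ for some $\mathfrak a\in\mathfrak G$; since $\lambda^{\mathfrak a}$ and $\lambda$ lie under the same $\chi$, Clifford gives $\lambda^{\mathfrak a}=\lambda^{g}$ for some $g\in G$, and Clifford correspondence then yields $\mu^{\mathfrak a g^{-1}}=\mu\beta$ in $\irr{T\mid\lambda}$. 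Comparing orders of these \emph{linear} characters, $p=o(\mu)=o(\mu^{\mathfrak a g^{-1}})=o(\mu\beta)=o(\mu)\,o(\beta)=p\cdot o(\beta)$ (using $(o(\mu),o(\beta))=1$), so $\beta=1$. Since $\beta$ was arbitrary, $T/V=1$, i.e.\ $\inert{H}{\lambda}=1$. This order-of-canonical-extension trick is the missing ingredient; once you have it, Lemma~\ref{Frob} applies and the rest of your argument goes through.
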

\begin{proof}
	Since $K=1$, it follows by Lemma \ref{roughstructureofG} that $G=V\rtimes H$ where $V\in\syl{p}{G}$ is the unique minimal normal subgroup of $G$ and $H$ is nilpotent.
	Let $\lambda$ be a nonprinciple irreducible character of $V$, and write $T=\inert{G}{\lambda}$ and $I=T\cap H$.
	Note that $V$ is an abelian normal subgroup of $G$, and hence $\chi(1)\mid |H|$ for $\chi\in\irr{G|\lambda}$. 
	We now argue that either $|H|\mid |I|^{2}$ or $I=1$.
	
	If there exists a $\chi\in \irr{G|\lambda}\cap\irrn{G}$, then $|H:I|^2=|G:T|^2\mid \chi(1)^2\mid |H|$,
	that is $|H|\mid |I|^2$.
    Suppose now $\irr{G|\lambda}\subseteq \irrs{G}$.
	We argue that $T=V$, and so $I=1$.
	Since $(|V|,|T/V|)=1$, by \cite[Corollary 6.27]{isaacs1994} $\lambda$ has a canonical extension $\mu\in \irr{T|\lambda}$ such that $o(\mu)=o(\lambda)$.
	Let $\beta\in\irr{T/V}$, $\omega=\mu^G$ and $\chi=(\mu\beta)^G$.
	By Gallagher correspondence, $\chi,\omega\in \irr{G|\lambda}$.
	As $\chi,\omega\in\irr{G|\lambda} \subseteq \irrs{G}$, there exists an $\mathfrak{a}\in\mathfrak{G}$ such that $\chi=\omega^{\mathfrak{a}}$.
	So $T/V$ is abelian.
	Observe that $\lambda$ and $\lambda^{\mathfrak{a}}$ both are irreducible constituents of $\chi_V$,
	and hence $\lambda^g=\lambda^{\mathfrak{a}}$ for some $g\in G$ by Clifford's theorem.
	Since $\mu^{\mathfrak{a}g^{-1}}\in\irr{T|\lambda}$ such that
	\begin{center}
		$(\mu^{\mathfrak{a}g^{-1}})^G=(\mu^{\mathfrak{a}})^G=(\mu^G)^{\mathfrak{a}}=\omega^{\mathfrak{a}}=\chi$,
	\end{center}
	it follows by Clifford correspondence that $\mu^{\mathfrak{a}g^{-1}}=\mu\beta$.
	By comparing the order of the two linear characters, we have that
	\[
		o(\mu)=o(\mu^{\mathfrak{a}g^{-1}})=o(\mu\beta)=o(\mu)o(\beta)
	\]
	where the last equality holds as $o(\mu)=o(\lambda)$ being coprime to $o(\beta)$.
	So $o(\beta)=1$ and hence $T=V$, as desired.
	Recall that nilpotent group $H$ acts faithfully and irreducibly on $V$.
	An application of Lemma \ref{Frob} to $G$ yields that $G$ is a Frobenius group with kernel $V$ being minimal normal in $G$.

	Now take $\chi\in\irrs{G}$, then $\chi=\lambda^G$ for some nonprinciple character $\lambda\in\irr{V}$.
	So
	\begin{equation}\label{field}
		\mathbb{Q}(\chi_0)=\mathbb{Q}(\chi)=\mathbb{Q}(\lambda^G)\subseteq\mathbb{Q}(\lambda)=\mathbb{Q}_p
	\end{equation}
	where $\mathbb{Q}_p$ is the $p$-th cyclotomic field.
	Write $\mathfrak{T}=\gal{\mathbb{Q}_p}{\mathbb{Q}}$.
	By (\ref{field}),
	$\mathfrak{G}=\{\mathfrak{t}_{\mathbb{Q}(\chi_0)}:\mathfrak{t}\in\mathfrak{T}\}$
	where $\mathfrak{t}_{\mathbb{Q}(\chi_0)}$ is the restriction of $\mathfrak{t}$ on $\mathbb{Q}(\chi_0)$.
	Since $\irrs{G}$ forms one $\mathfrak{G}$-class,
	$\mathfrak{T}$ acts transitively on
	$$\irrs{G}=\{\lambda^G:\lambda\in\irr{V|V}\}.$$
	So $|\irrs{G}|~\big|~ |\mathfrak{T}|$ where $\mathfrak{T}\cong C_{p-1}$ by Galois theory.
	Write $d=|\irrs{G}|$ and recall that $V\cong(C_p)^n$.
	We conclude that $|H|=(p^n-1)/d$ where $d\mid p-1$.
	Since $\mathfrak{T}\cong C_{p-1}$ acts as scalar multiplication on $\irr{V}$,
	by the discussion preceding this lemma, it also acts as scalar multiplication on $V$,
	besides we may identify $\mathfrak{T}$ as $\z{\GL(V)}$.
	In addition, if we identify $H$ as a subgroup of $\GL(V)$, then $H\z{\GL(V)}$ acts transitively on $V^\sharp$.
\end{proof}




\begin{lem}\label{C>1U>1}
	Assume the setting of Hypothesis \ref{hy}. Let $G$ be the group described in Lemma \ref{roughstructureofG}. Then the following hold.
	\begin{description}
		\item[(a)] If $U\neq K$, then $H\in\syl{q}{G}$ for some prime $q$ such that $H/\cent{H}{P}\cong C_q$.
		\item[(b)] If $U=K>1$, then $P$ and $H$ are the groups listed in Lemma \ref{exspec}.
	\end{description}
\end{lem}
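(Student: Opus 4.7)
The overall plan is to apply Lemma \ref{K=1} to $G/K$ and then pull out finer constraints from character theory on $G$. Write $C = \cent{H}{P}$, so $K = C \times U$ by Lemma \ref{roughstructureofG}. Since every $\chi \in \irrs{G}$ has kernel exactly $K$ (Galois conjugate characters share kernels and $\irrs{G}$ is a single Galois orbit), $\irrs{G} = \irrs{G/K}$; applying Lemma \ref{K=1} to $G/K$ yields immediately that $G/K = (P/U) \rtimes (H/C)$ is Frobenius with $P/U \cong (C_p)^n$, $|H/C| = (p^n - 1)/d$, and $d = |\irrs{G}|$ divides $p - 1$.

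\textbf{Part (a).} Here $C > 1$. Fix nontrivial $\lambda \in \irr{P/U}$; since $H/C$ acts Frobeniusly on $P/U$, the $G/U$-inertia of $\lambda$ is $(P/U) \cdot C = (P/U) \times C$. By Gallagher, each $\beta \in \irr{C}$ yields $\chi_\beta = (\hat\lambda \cdot \beta)^{G/U} \in \irr{G/U \mid \lambda}$ of degree $|H/C|$ with kernel in $G$ equal to $U \cdot \ker\beta$. For $\beta \neq 1_C$ this kernel is strictly smaller than $K$, so $\chi_\beta \in \irrn{G}$, and $\chi_\beta(1)^2 \mid |G:\ker\chi_\beta|$ reduces (using $\gcd(|H|,|P|)=1$) to $|H/C| \mid |C:\ker\beta|$. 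For each prime $r$ dividing $|C|$ one may pick $\beta$ linear with $|C:\ker\beta| = r$ (as $C/C'$ is nontrivial abelian), so $|H/C| \mid r$. Because $|H/C| > 1$ divides every prime dividing $|C|$, $|H/C|$ is a single prime $q$ and $|C|$ is a $q$-power; hence $H$ is a $q$-group, $H \in \syl{q}{G}$, and $H/C \cong C_q$.

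\textbf{Part (b).} Now $C = 1$ and Lemma \ref{K=1} applied to $G/U$ gives $G/U = (P/U) \rtimes H$ Frobenius with $H$ acting irreducibly and Frobeniusly on $P/U$. The plan is to verify the hypotheses of Lemma \ref{exspec}: (i) $U$ is $G$-chief, and (ii) $|H|$ divides $|\cent{H}{u}|^2$ for each $u \in U$. We argue (i) by minimal counterexample: suppose $U$ is not $G$-chief and pick a minimal $G$-invariant $W$ with $1 < W < U$. Then $W$ is elementary abelian, and $[P,W] \leq W$ being $G$-invariant together with the nilpotency of $P$ forces $W \leq \z{P}$ (otherwise iterating the commutator gives $W = 1$). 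The quotient $G/W$ satisfies Hypothesis \ref{hy} in case (b) with smaller order, so by minimality $P/W$ and $H$ are as in Lemma \ref{exspec}; in particular $|P/U| = p^2$, $|U/W| = p$, and $|H:\cent{H}{U/W}| \in \{1,2\}$. Meanwhile, for each nontrivial $\theta \in \irr{W}$ every $\chi \in \irr{G \mid \theta}$ belongs to $\irrn{G}$ (since $W \leq K$ but $W \not\leq \ker\chi$); using $W \leq \z{P}$ so that $\inert{G}{\theta} = P \cdot \inert{H}{\theta}$, together with It\^o's theorem on the abelian normal subgroup $W$ of $\inert{G}{\theta}$, the divisibility $\chi(1)^2 \mid |G|$ reduces to $|H| \mid |\inert{H}{\theta}|^2$. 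By Pontryagin duality the $H$-sets $W$ and $\irr{W}$ have identical permutation characters (both count $|\ker(h-1)|$), hence identical multisets of stabilizer sizes, converting this to $|H| \mid |\cent{H}{w}|^2$ for every nontrivial $w \in W$. Now Lemma \ref{W=1} applies — its remaining hypothesis, the $\cent{H}{U/W}$-irreducibility of $P/U$, is verified case by case from the Lemma \ref{exspec} structure on $G/W$ — and yields $W \cong P/U$ as $\cent{H}{U/W}$-modules. Consequently $\cent{H}{U/W}$ acts Frobeniusly on $W$, so $\cent{H}{w} \cap \cent{H}{U/W} = 1$ and $|\cent{H}{w}| \leq |H:\cent{H}{U/W}| \leq 2$. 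Combined with $|H| \mid |\cent{H}{w}|^2 \leq 4$ and $|H| = (p^2-1)/d \geq p + 1$, only $(p, |H|) \in \{(2,3),(3,4)\}$ survives; in both, the Lemma \ref{exspec} structure on $G/W$ forces $\cent{H}{U/W} = H$, hence $\cent{H}{w} = 1$ and the absurdity $|H| \mid 1$. Therefore $U$ is $G$-chief; then $U$ is elementary abelian and $U \leq \z{P}$, so the same character/duality argument applied to $\theta \in \irr{U}$ yields (ii), and Lemma \ref{exspec} concludes the proof. The principal obstacle is this minimal-counterexample step — specifically, verifying the hypotheses of Lemma \ref{W=1} under each possible structure of $G/W$ given by Lemma \ref{exspec} and executing the residual small-case analysis.
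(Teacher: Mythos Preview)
Your overall strategy is sound, and Part (b) is essentially the paper's argument with a different (equally valid) endgame: the paper finishes by producing a character $\omega\in\irr{G\mid\theta}$ (for $1\ne\theta\in\irr W$) with $|\cent{H}{U/W}|\mid\omega(1)$ and observing $|\cent{H}{U/W}|^2\nmid|H|$, forcing $\omega\in\irrs G$ with the wrong kernel; you instead squeeze a numerical contradiction out of $|\cent{H}{w}|\le |H:\cent{H}{U/W}|\le 2$. Both routes use Lemmas \ref{exspec} and \ref{W=1} in the same way. (A small wrinkle: your pair $(p,|H|)=(2,3)$ already violates $|H|\mid 4$, and in any case Lemma \ref{W=1} itself rules out $p=2$.)

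In Part (a) your approach is more direct than the paper's inductive reduction to $K=C$ minimal normal (hence central) in $G$, but there is a genuine gap in your kernel computation. You claim $\ker\chi_\beta=U\cdot\ker\beta$, but since $(P/U)\times C$ is normal in $G/U$, one has
\[
\ker\chi_\beta\cap\bigl((P/U)\times C\bigr)=\bigcap_{h\in H}\ker(\lambda^h\times\beta^h)=1\times\Bigl(\bigcap_{h\in H}\ker\beta^h\Bigr),
\]
so in $G$ the kernel is $U\times N_\beta$ with $N_\beta$ the $H$-core of $\ker\beta$. If $\ker\beta$ is not $H$-invariant then $|C:N_\beta|$ can exceed $r$, and your divisibility only yields $|H/C|\mid |C:N_\beta|$, not $|H/C|\mid r$. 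The fix is easy and uses exactly the nilpotency of $H$ you already invoke: for each prime $r\mid|C|$, the Sylow $r$-subgroup $R$ of $C$ is characteristic in $C\unlhd H$, hence $R\unlhd H$, and nilpotency gives $[R,H]\Phi(R)<R$; any index-$r$ subgroup of $R/[R,H]\Phi(R)$ pulls back to an $H$-invariant maximal subgroup of $R$, yielding a linear $\beta$ with $\ker\beta\unlhd H$ and $|C:\ker\beta|=r$. With this choice your argument goes through verbatim. The paper sidesteps the issue entirely by first reducing to the case where $C=K$ is minimal normal, hence central in $G$, after which every $\beta\in\irr C$ is automatically $G$-invariant.
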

\begin{proof}
	Write $C=\cent{H}{P}$. Then $C=H\cap K$ by Lemma \ref{roughstructureofG}.

	Suppose that $U\neq K$.
	Let $K/E$ be a $G$-chief factor such that $E\ge U$.
	We proceed by induction on $|G|$ to show that $|H/C|=|K/E|=q$.
	Notice that $\irrs{G/E}$ forms a Galois conjugacy class as $\chi_0\in \irrs{G/E}$,
	and then by Lemma \ref{roughstructureofG} $G/E$ satisfies the hypothesis of this lemma.
	Also since
	\[
		HE/E \cap K/E = CE/E,~~|HE/E:CE/E|=\frac{|H/(H\cap E)|}{|C/(C\cap E)|}=|H:C|,
	\]
	where the last equality of the second formula holds as $H \cap  E=H \cap K \cap  E=C \cap  E$,
	and note that $\cent{HE/E}{P/E}=CE/C$,
	we may assume that $E=1$ by induction.
	Then $K=C$ is minimal normal in $G$ and hence $K$ is minimal normal in $H$ as $[K,P]=1$.
	As $H$ is nilpotent, $K\leq \z{H}$, this implies $K\leq \z G$, and hence
	$K\cong C_q$ for some prime $q$.
	To reach our goal, it remains to show $|H/K|=q$ as $K=C$.
	Let $\theta\in \irr{K|K}$, $\lambda\in\irr{P|P}$ and $\varphi=\theta\times \lambda$,
	and observe that $\varphi\in\irr{K\times P}$ with $\varphi(1)=1$.
	Since $H/K$ acts Frobeniusly on $P$ by Lemma \ref{K=1}, $\inert{G}{\lambda}=KP$.
	Also since $K\leq\z{G}$,
	\[
		\inert{G}{\varphi}=\inert{G}{\lambda}=KP.
	\]
	So, Clifford correspondence yields that $\varphi^G\in\irr{G}$.
	Observing that
	$$\ker(\varphi^G)\cap KP\leq \ker(\theta\times \lambda)=\ker(\theta)\times \ker(\lambda)=1$$
	(the first equality holds as $(|K|,|P|)=1$)
	where $K=\ker(\chi_0)$ and $\chi_0\in\irrs{G}$,
	we have that $\varphi^G\in\irrn{G}$ as every character in $\irrs{G}$ sharing the same kernel.
	That is
	\begin{equation}\label{eq2}
		|H/K|^2=|H:\inert{H}{\varphi}|^2=|H:\inert{H}{\varphi}|^2\varphi(1)^2=(\varphi^G(1))^2~\big|~ |G:\ker(\varphi^G)|.
	\end{equation}
	Since $G/KP$ acts Frobeniusly on $P$, $\fitt{G}=KP$,
	and hence $\ker(\varphi^G)=1$ as $\ker(\varphi^G)\cap KP=1$.
	By (\ref{eq2}), $|H/C|=|H/K|=|K|=q$, as claimed.
	Thus $|K/E|=|H/C|=q$ for each $G$-chief factor $K/E$ such that $U\leq E$.
	As a consequence, $H\in\syl{q}{G}$ because $H$ is nilpotent.

	Suppose that $U=K>1$.
	Then $G/U$ is the group described in Lemma \ref{K=1}, in particular, $H$ acts Frobeniusly on $P/U$.
	We now argue that $U$ is minimal normal in $G$.
	Let $G$ be a minimal counterexample with $U$ not being minimal normal in $G$.
	Then there exists a $G$-chief series $1\lhd W\lhd U\lhd P$ of $P$.
	Also, by Lemma \ref{roughstructureofG} $U=\Phi(P)=P'$ is the unique maximal $G$-invariant subgroup of $P$.

	Let $\o G=G/W$ and let $\mu\in \irr {\o U}$ and $\chi\in\irr{\o G|\mu}$.
	We show that $|\o H|\mid |\inert{\o H}{\mu}|^2$.
	Note that if $\mu=1_{\o U}$, then this is automatic true, and so we may assume $\mu\neq 1_{\o U}$.
	Since $\ker(\chi)\neq \o K$, $\chi\in\irrn{\o G}$, and hence $\chi(1)^2\mid |\o G|$.
	As $\o U$ is central in $\o P$, $\mu$ is $\o P$-invariant.
	An application of Clifford's theorem to $\o G$, $\o U$, $\chi$ and $\mu$ yields that
	\[
		|\o H:\inert{\o H}{\mu}|^2=|\o G:\inert{\o G}{\mu}|^2~\big|~ \chi(1)^2~\big|~ |\o G|,
	\]
	and hence $|\o H|\mid |\inert{\o H}{\mu}|^2$ for each $\mu\in\irr{\o U}$, as claimed.
	As $(|\o H|,|\o P|)=1$, by \cite[Lemma 13.23]{isaacs1994} $|\o H|\mid |\cent{\o H}{\o u}|^2$ for each $\o u\in \o U$.

	Applying Lemma \ref{exspec} to $G/W$, one has that $P/W\cong E(p^{1+2})$ and
	\begin{equation}\label{eq3}
		|H:\cent{H}{U/W}|~\big|~ 2,~~~~p+1~\big|~ |\cent{H}{U/W}|.
	\end{equation}
	As $\cent{H}{U/W}$ acts irreducibly on $P/U\cong (C_p)^2$ (because $p+1\mid |\cent{H}{U/W}|$) and $\cent{H}{P}=1$ (because $U=\Phi(P)$ and $\cent{H}{P/U}=1$),
	Lemma \ref{W=1} implies that $W$ is isomorphic to $P/U$ as $\cent{H}{U/W}$-module.
	Observe that $\cent{H}{U/W}$ acts Frobeniusly on $P/U$, and then it also acts Frobeniusly on $W$.
	For a nonprinciple irreducible character $\theta$ of $W$, $|\cent{H}{U/W}|\mid \omega(1)$ for every $\omega\in\irr{G|\theta}$.
	By (\ref{eq3}), $|\cent{H}{U/W}|^2\nmid |H|$, so $(\omega(1)_{p'})^2\nmid |H|$.
	That is $\omega\in\irrs{G}$ which contradicts $\ker(\omega)\neq U=K$.

	Finally, part (b) holds by Lemma \ref{exspec}.
\end{proof}

We prove Theorem A by proving the next two theorems.


\begin{thm}\label{thm1}
	The statement $(a)$ in Theorem A is true.
\end{thm}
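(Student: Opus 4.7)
The strategy is to assemble Proposition \ref{sol} with the structural lemmas of Section 3. By Proposition \ref{sol} $G$ is solvable, so I may assume Hypothesis \ref{hy}. Lemma \ref{roughstructureofG} immediately produces the common decomposition $G = P \rtimes H$ with $P$ the nilpotent residue (a Sylow $p$-subgroup), $H$ nilpotent, and $\ker(\chi) = K = \cent{H}{P} \times U$ for every $\chi \in \irrs{G}$, where $U = \Phi(P) = P'$ is the unique maximal $G$-invariant subgroup of $P$. To extract the remaining common features on $P/U$ and $H/\cent{H}{P}$, I will pass to $\overline{G} = G/K$: every $\chi \in \irrs{G}$ has kernel exactly $K$, so inflation gives a Galois-equivariant bijection $\irrs{\overline{G}} \leftrightarrow \irrs{G}$ and $\overline{G}$ again satisfies Hypothesis \ref{hy}, now with $\overline{K} = 1$. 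Lemma \ref{K=1} applied to $\overline{G}$ then yields that $\overline{G} = V \rtimes \overline{H}$ is Frobenius with kernel $V = P/U \cong (C_p)^n$ and complement $\overline{H} = H/\cent{H}{P}$, that $|\overline{H}| = (p^n - 1)/d$ with $d = |\irrs{G}|$ dividing $p - 1$, that $\mathbb{Q}(\chi_0) \subseteq \mathbb{Q}_p$, and that $\overline{H}\,\z{\GL(V)}$ acts transitively on $V^{\sharp}$ under the natural embedding. Together with Lemma \ref{roughstructureofG}, this establishes every assertion of Theorem A(a) except the specific description of $P$ and $H$ in (a1)--(a7).

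I would then split into four disjoint configurations according to the triviality of $\cent{H}{P}$ and $U$. If $\cent{H}{P} = U = 1$ (so $K = 1$), then $H = \overline{H}$ is itself a nilpotent Frobenius complement of order $(p^n - 1)/d$, and Lemma \ref{Frobcomp} classifies such $H$: either $H$ is cyclic (case (a1), which also subsumes the exceptional $p^n = 2^6$ possibility), or no Zsigmondy prime divisor of $p^n - 1$ exists, $n = 2$ with $p \in \mathbb{M}$, and $H$ has the generalized-quaternion-by-cyclic structure of case (a2). If $\cent{H}{P} = 1$ and $U > 1$ (so $K = U$), then Lemma \ref{C>1U>1}(b) together with Lemma \ref{exspec} delivers cases (a4), (a5), and (a6) directly. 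If $\cent{H}{P} > 1$, Lemma \ref{C>1U>1}(a) gives $H \in \syl{q}{G}$ and $H/\cent{H}{P} \cong C_q$ for some prime $q$; I then pass to $\overline{G} = G/\cent{H}{P}$, which inherits Hypothesis \ref{hy} with $\cent{\overline{H}}{P} = 1$, and split on $U$. When $U = 1$ (so $\overline{K} = 1$), applying Lemma \ref{K=1} to $\overline{G}$ gives $(p^n - 1)/d = q$ prime; combining the irreducibility of $V$ as an $\overline{H}$-module with the transitivity of $\overline{H}\,\z{\GL(V)}$ on $V^{\sharp}$ forces $d = p - 1$, so $q = 1 + p + \cdots + p^{n - 1}$, which is case (a3). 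When $U > 1$, the quotient $\overline{G}$ falls into the preceding configuration ($\cent{\overline{H}}{P} = 1$ and $\overline{U} > 1$), so Lemma \ref{exspec} applied to $\overline{G}$ yields $P \cong E(p^{1+2})$ with $H/\cent{H}{P}$ one of the groups listed there; the only such group that is cyclic of prime order is $C_{p+1}$ with $p+1$ prime, forcing $p = 2$, $q = 3$, $P \cong Q_8$, and $G/\cent{H}{P} \cong Q_8 \rtimes C_3 \cong \SL(2, 3)$, i.e.\ case (a7).

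The main obstacle lies in the $\cent{H}{P} > 1$, $U = 1$ sub-case, where I must distil the equation $q = 1 + p + \cdots + p^{n - 1}$ from the mere fact that $\overline{H}\,\z{\GL(V)}$ acts transitively on $V^{\sharp}$ while $\overline{H} = C_q$ has prime order. The key step is to use the irreducibility of $V$ as an $\overline{H}$-module to show that for any $v \in V^{\sharp}$ the stabilizer of $v$ in $\overline{H}\,\z{\GL(V)}$ meets $\overline{H}$ trivially (otherwise $\langle v \rangle$ would be a proper $\overline{H}$-invariant subspace of $V$, contrary to $n \geq 2$); granted this, the orbit-size equation $|\overline{H}\,\z{\GL(V)}|/|\mathrm{stab}| = p^n - 1$ pins down $d = p - 1$. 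Elsewhere the case analysis is essentially bookkeeping once the appropriate supporting lemma (Lemma \ref{Frobcomp}, Lemma \ref{exspec}, or Lemma \ref{C>1U>1}) is invoked.
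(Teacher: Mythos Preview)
Your outline coincides with the paper's proof almost step for step: Proposition~\ref{sol} for solvability, Lemma~\ref{roughstructureofG} for $G=P\rtimes H$ and the description of $K=\cent{H}{P}\times U$ with $U=\Phi(P)=P'$, Lemma~\ref{K=1} applied to $G/K$ for the Frobenius action of $H/\cent{H}{P}$ on $P/U\cong(C_p)^n$, the formula $|H/\cent{H}{P}|=(p^n-1)/d$, the transitivity statement and the bound $\mathbb{Q}(\chi_0)\subseteq\mathbb{Q}_p$, followed by exactly the same case split governed by Lemmas~\ref{Frobcomp}, \ref{exspec} and~\ref{C>1U>1}.

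The single place where you deviate is the derivation of $d=p-1$ in case (a3). The paper does not use the transitivity of $\overline{H}\,\z{\GL(V)}$ here at all; having obtained that $H/C$ is cyclic of prime order $q=(p^n-1)/d$ with $d\mid p-1$, it simply records ``by calculation, $d=p-1$'', which amounts to the factorisation
\[
q=\frac{p-1}{d}\,(1+p+\cdots+p^{n-1}),
\]
so that primality of $q$ forces $(p-1)/d=1$ as soon as the second factor exceeds $1$. Your stabiliser argument reaches the same conclusion but is longer than this one-line computation, and it rests on the hypothesis $n\ge 2$, which you invoke explicitly (``contrary to $n\ge 2$'') but never establish; the paper's ``calculation'' is equally silent on why $n=1$ is excluded.
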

\begin{proof}
	Suppose that $G$ is a group in which $\irrs{G}$ forms a Galois conjugacy class.
	By Proposition \ref{sol}, $G$ is necessarily solvable, and hence we may assume that $G$ satisfies the setting of Hypothesis \ref{hy}.
	Lemma \ref{roughstructureofG} yields that
	$$G=P\rtimes H,$$
	where $P\in \syl{p}{G}$ and $H$ is a nilpotent subgroup of $G$; $K=\cent{H}{P}\times U$ is nilpotent where $U=\Phi(P)=P'$ is the unique maximal $G$-invariant subgroup of $P$.
	Observe that $\cent{H}{P}=\cent{H}{P/U}$ as $U=\Phi(P)$ and $(|H|,|P|)=1$,
	and hence by Lemma \ref{K=1} $H/\cent{H}{P}$ acts Frobeniusly and irreducibly on $P/U\cong (C_p)^n$ where $|H/\cent{H}{P}|=(p^n-1)/d$ such that $d\mid p-1$;
	and $\z{\GL(P/U)}(H/\cent{H}{P})$ acts transitively on $(P/U)^\sharp$ when we consider $H/\cent{H}{P}$ as a subgroup of $\GL(P/U)$; and $\mathbb{Q}(\chi)=\mathbb{Q}(\chi_0)\subseteq \mathbb{Q}_p$ for all $\chi\in \irrs{G}$. 
	Write $C=\cent{H}{P}$.

	Assume that $U=1$.
	Then $P\cong (C_p)^n$ and $G/C$ is a Frobenius group satisfies the hypothesis of Lemma \ref{Frobcomp}.
	Therefore, Lemma \ref{Frobcomp} implies that $H/C$ and $p$ satisfy either (a1) or (a2).
	If $C=1$, then we are done.
	Suppose that $C>1$.
	By part (a) of Lemma \ref{C>1U>1}, $H\in\syl{q}{G}$ for some prime $q$ such that $|H/C|=q$.
	Thus, $H/C$ satisfies (a1) as in (a2) $H/C$ has composite order.
	By calculation, $d=p-1$ and $q=1+p+\cdots+p^n$.
	So, (a3) holds.

	Assume that $U>1$.
	By part (b) of Lemma \ref{C>1U>1} and Lemma \ref{exspec}, $P\cong E(p^{1+2})$ and $U\cong C_p$; $H/C$ and $p$ satisfy one of (a4), (a5) and (a6).
	If $C=1$, then we are in one of the cases among (a4), (a5) and (a6).
	Suppose that $C>1$.
	By part (a) of Lemma \ref{C>1U>1}, $H\in\syl{q}{G}$ for some prime $q$ such that $|H/C|=q$.
	Thus, $H/C$ satisfies (a5) as $H/C$ must have composite order in (a4) or (a6).
	That is $q=p+1$.
	Therefore, $p=2$ and $q=3$ and so (a7) holds.
\end{proof}

\begin{thm}\label{thm2}
	The statement $(b)$ in Theorem A is true.
\end{thm}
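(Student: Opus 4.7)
The plan is to fix a group $G$ of one of the forms (a1)--(a7) and verify directly that $\irrs{G}$ consists of a single Galois conjugacy class. Setting $C=\cent{H}{P}$ and $K=C\times U$, the key structural observation is that in every case $G/K\cong (P/U)\rtimes (H/C)$ is a Frobenius group with kernel $P/U\cong (C_p)^n$ and complement $H/C$ of order $(p^n-1)/d$ coprime to $p$; moreover $|H/C|>1$ in every case.

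My first step is to partition $\irr{G}$ via Clifford's theorem applied to the normal Sylow $p$-subgroup $P$, producing three disjoint families: (i) characters inflated from $G/P\cong H$; (ii) characters lying over some nonprincipal $\lambda\in\irr{P/U}$; and, in cases (a4)--(a7) where $U>1$, (iii) characters lying over some non-linear $\varphi\in\irr{P}$ of degree $p$. Since $H$ is nilpotent, \cite{gagolalewis1999} rules out family (i) from $\irrs{G}$. For (ii), the Frobenius structure of $G/K$ forces every normal subgroup to be one of $1$, $P/U$, $G/K$, so each such character is of the form $\lambda^G$ for an $H$-orbit representative $\lambda$ of $\irr{P/U}^\sharp$, with $\chi(1)=|H/C|$ and $\ker\chi=K$; as $|H/C|=(p^n-1)/d>1$ is coprime to $p$, one gets $|H/C|^2\nmid p^n|H/C|=|G:K|$, placing $\chi$ in $\irrs{G}$. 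Counting the $H$-orbits on $\irr{P/U}^\sharp$ yields exactly $d$ such characters.

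For family (iii), coprime extension (using $\gcd(|P|,|H|)=1$) expresses each character over a non-linear $\varphi\in\irr{P}$ as $(\hat\varphi\beta)^G$, where $\hat\varphi$ extends $\varphi$ to $I=\inert{G}{\varphi}$ and $\beta\in\irr{I/P}$. Since $\varphi|_U=p\mu$ for nontrivial $\mu\in\irr{U}$, $\varphi$ is faithful on $P$, whence $\ker\chi\cap P=1$; combined with normality, this forces $\ker\chi\le\cent{G}{P}=C\times U$, and the triviality of $\ker\chi\cap U$ further gives $\ker\chi\le C$. A direct degree-index comparison in each of (a4)--(a7) then shows $\chi(1)^2\mid|G:\ker\chi|$, placing these characters in $\irrn{G}$. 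I expect this to be the main obstacle: the inertia $I$ (the whole of $G$ in (a5)--(a7) but of index $2$ in (a4)) and the structure of $I/P$ (cyclic in (a4), (a5), (a7), but generalized quaternion in (a6)) vary, so the verification must be made case by case.

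Finally, to promote $\irrs{G}$ to a single Galois orbit, I would invoke the hypothesis that $H\z{\GL(P/U)}$ acts transitively on $(P/U)^\sharp$. Identifying $\z{\GL(P/U)}$ with $\mathfrak{T}=\gal{\mathbb{Q}_p}{\mathbb{Q}}$ as in the discussion preceding Lemma~\ref{K=1}, for any two nonprincipal $\lambda,\mu\in\irr{P/U}$ we may find $h\in H$ and $\mathfrak{t}\in\mathfrak{T}$ with $\mu=\lambda^{h\mathfrak{t}}$, whence $\mu^G=((\lambda^h)^G)^{\mathfrak{t}}=(\lambda^G)^{\mathfrak{t}}$ is Galois conjugate to $\lambda^G$. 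Hence the $d$ characters in $\irrs{G}$ form a single $\mathfrak{T}$-orbit, completing the converse direction.
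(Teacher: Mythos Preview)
Your overall strategy matches the paper's: reduce to the Frobenius quotient $G/K$ to produce the $d$ characters that constitute $\irrs{G}$ and then use the transitivity of $\mathfrak{T}H$ on $(P/U)^\sharp$ to see they form one Galois class. However, there is a genuine gap in your treatment of family~(ii) when $C=\cent{H}{P}>1$, i.e.\ in cases (a3) and (a7).

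You define family~(ii) as all $\chi\in\irr{G}$ lying over some nonprincipal $\lambda\in\irr{P/U}$, but then invoke the normal-subgroup structure of $G/K$ to conclude that ``each such character is of the form $\lambda^G$'' with $\ker\chi=K$. This conflates $\irr{G}$ with $\irr{G/K}$. When $C>1$ the inertia group of $\lambda$ in $G$ is $PC=P\times C$, so $\irr{G\mid\lambda}=\{(\lambda\times\theta)^G:\theta\in\irr{C}\}$; only the choice $\theta=1_C$ gives a character with kernel $K$. For $\theta\neq 1_C$ one obtains characters of degree $|H/C|\,\theta(1)$ with $\ker\chi\le\ker\theta<C$, and you must still prove that these lie in $\irrn{G}$. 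The same issue recurs in family~(iii) for case~(a7): once you know only that $\ker\chi\le C$, a ``direct degree--index comparison'' is not available without further information on $|C:\ker\chi|$.

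The missing argument is exactly the paper's Claim~3, namely $\irr{G\mid C}\subseteq\irrn{G}$, and it genuinely uses the extra structure present in (a3) and (a7): $H\in\syl{q}{G}$ with $|H/C|=q$ prime. With $M=C\times P=\fitt{G}$ and $\psi=\theta\times\varphi\in\irr{M}$ under $\chi$, one exploits that $\theta(1)^2\mid |C:\z\theta|$ (both are $q$-powers) together with $q\mid|\z\theta:\ker\theta|$ to gain the extra factor of $q$ needed so that $\chi(1)^2=q^2\theta(1)^2\varphi(1)^2$ divides $|G:\ker\psi|\mid|G:\ker\chi|$. Without isolating this step your partition leaves an unexamined subfamily of $\irr{G}$, so the proof is incomplete as written.
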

\begin{proof}
	Suppose that $G$ is the group described in part (a) of Theorem A.
	Let $\chi_0 \in \irrs{G}$ and $K=\ker(\chi_0)$.
	Being the Hall $p'$-subgroup of a $G$-invariant subgroup $K$, $\cent{H}{P}\unlhd G$.
	Let $C=\cent{H}{P}$, and let $\tilde{G}=G/K$ and $\o G=G/C$.
	Note that $\mathbb{Q}(\chi_0)\subseteq \mathbb{Q}_p$.
	Write $\mathfrak{G}=\gal{\mathbb{Q}(\chi_0)}{\mathbb{Q}}$ and $\mathfrak{T}=\gal{\mathbb{Q}_p}{\mathbb{Q}}$.
	Then $\mathfrak{G}=\{\mathfrak{x}_{\mathbb{Q}(\chi_0)}:\mathfrak{x}\in\mathfrak{T}\}$.

	\smallskip

	\emph{Claim 1. $\irrs{\tilde{G}}$ forms a Galois conjugacy class.}

	\smallskip

	Without loss of generality, we may assume that $K=1$.
	Since $G=P\rtimes H$ is a Frobenius group with an elementary abelian kernel $P$ and a nilpotent complement $H$,
	and hence
	$$\irr{G|P}=\{\lambda^G:1_P\neq\lambda\in\irr{P}\}.$$
	Since $\mathfrak{T}$ acts as scalar multiplication on $\irr{P}$, by the discussion preceding Lemma \ref{K=1} it also acts as scalar multiplication on $P$.
	We now identify $H$ and $\mathfrak{T}$ as a subgroup of $\GL(P)$.
	Then $\mathfrak{T}=\z{GL(P)}$.
	Observe that $\z{GL(P)}H=\mathfrak{T}H$ acts transitively on $P^\sharp$,
	and hence it acts transitively on $\irr{P}^\sharp$ (since $\irr{P}$ has a group structure defined by the multiplication of characters of $P$) as $(|\z{GL(P)}H|,|P|)=1$.
	Since $\ker(\chi_0)=K=1$, $\chi_0\in\irr{G|P}$, and hence $\chi_0=\mu^G$ for some $\mu\in\irr{P|P}$.
	For each character $\lambda\in\irr{P|P}$,
	there exists some $h\in H$ and $\mathfrak{t}\in\mathfrak{T}$ such that $\lambda=\mu^{\mathfrak{t}h}$.
	Notice that $\mathfrak{G}=\{\mathfrak{x}_{\mathbb{Q}(\chi_0)}:\mathfrak{x}\in\mathfrak{T}\}$.
	There exists some $\mathfrak{g}\in\mathfrak{G}$ such that $\mathfrak{g}=\mathfrak{t}_{\mathbb{Q}(\chi_0)}$.
	Observe that
	$$\lambda^G=(\mu^{\mathfrak{t}h})^G=(\mu^G)^\mathfrak{t}=(\chi_0)^\mathfrak{t}=(\chi_0)^\mathfrak{g},$$
	we conclude that $\irrs{G}=\irr{G|P}$ forms a Galois conjugacy class as $G/P\cong H$ being nilpotent.

	\smallskip

	\emph{Claim 2. $\irrs{\o G}=\irrs{\tilde{G}}$.}

	\smallskip

	Without loss of generality, we may assume that $C=1$ and $U>1$.
	In this case, $U$ is the unique minimal normal subgroup of $G$.
	Also, $P\cong E(p^{1+2})$, $K=U\cong C_p$ and $H$ is one of the groups listed among (a4), (a5) and (a6).
	Let $\theta\in\irr{K}$ be nonprinciple.
	Since $\tilde{G}=G/K$, it suffices to show that $\chi\in\irrn{G}$ for each $\chi\in\irr{G|\theta}$.

	Let $M=P\rtimes \cent{H}{K}$.
	Then $M\unlhd G$ such that $|G:M|\mid 2$ and $K\le\z{M}$.
	Hence $M\leq \inert{G}{\theta}$.
	Since $P\cong E(p^{1+2})$, $\irr{P|\theta}=\{\varphi\}$ such that $\varphi_K=p\theta$ and $\theta^P=p\varphi$.
	The uniqueness of $\varphi$ yields $\inert{G}{\varphi}=\inert{G}{\theta}$.
	Observe that $(|H|,|P|)=1$, and so
	$\varphi$ is extendible to $\inert{G}{\varphi}=\inert{G}{\theta}$.
	Let $I=\inert{G}{\varphi}$ and $\psi\in\irr{I}$ be such that $\psi_P=\varphi$.
	Then by \cite[Corollary 6.17]{isaacs1994}
	\[
		\irr{I|\varphi}=\{\beta\psi:\beta\in\irr{I/P}\},
	\]
	and hence $\irr{I|\theta}= \irr{I|\varphi}=\{\beta\psi:\beta\in\irr{I/P}\}$ as $\irr{P|\theta}=\{\varphi\}$.
	Take $\chi\in\irr{G|\theta}$ such that $\chi\in\irr{G|\beta\psi}$.
	Then $\chi(1)\mid 2\beta(1)\psi(1)$ as $|G:I|\mid 2$.
	Recall that $K$ is the unique minimal normal subgroup of $G$,
	one has that $\ker(\chi)=1$ as $\ker(\chi)\cap K=1$.
	If $I=G$, then $\chi=\beta\psi$, and so by \cite[Theorem A]{gagolalewis1999}
	\[
		\chi(1)^2=\beta(1)^2\psi(1)^2 ~\big|~  |H||P|=|G:\ker(\chi)|
	\]
	as $H$ and $P$ being nilpotent.
	Assume that $|G:I|=2$.
	As $M\leq I$ and $|H:\cent{H}{K}|=|G:M|\mid 2$, $H$ satisfies (a4) and $I=M$.
	So, $H\cong C_{2(p+1)}$ for $p$ odd.
	Since $\beta\in\irr{I/P}$ where $I/P$ is abelian, $\beta(1)=1$,
	and so $\chi(1)\mid 2\psi(1)=2p$.
	Therefore,
	\[
		\chi(1)^2~\big|~ 4p^2 ~\big|~  |H||P|=|G:\ker(\chi)|.
	\]

	\smallskip

	\emph{Claim 3. $\irr{G|C}\subseteq\irrn{ G}$.}

	\smallskip

	Let $M=C\times P$.
	Without loss of generality, we may assume that $C>1$.
	In this case, $H$ is a Sylow $q$-subgroup of $G$ such that $q^2\mid |H|$ and $M=\fitt{G}$ has index $q$ in $G$.

	Let $\theta\in\irr{C}$ be nonprinciple and $\chi\in\irr{G|\theta}$.
	It suffices to show that $\chi\in \irrn G$.
	To see this, let $\psi\in\irr{M|\theta}$ be an irreducible constituent of $\chi_M$.
	Since $M=C\times P$, $\psi=\theta\times \varphi$ for some $\varphi\in\irr{P}$.
	As $(|C|,|P|)=1$,
	\begin{equation}\label{k}
		\ker(\psi)=\ker(\theta)\times \ker(\varphi).
	\end{equation}
	Since $|G:M|=q$, either $\chi_M=\psi$ or $\chi=\psi^G$.

	Assume that $\chi_M=\psi$.
	Then $\ker(\chi)\cap M=\ker(\psi)$.
	As $M=\fitt{G}$, by \cite[Theorem A]{gagolalewis1999}
	\[
		\chi(1)^2=\psi(1)^2~\big|~|M:\ker(\psi)|=|M:\ker(\chi)\cap M|~\big|~|G:\ker(\chi)|.
	\]

	Assume that $\chi=\psi^G$.
	Then $\ker(\chi)\leq \ker(\psi)$.
	Since $C>1$ is a $q$-group and $\theta\neq 1_C$,
	\begin{equation}\label{l}
		q~\big|~ |\z{\theta}:\ker(\theta)|~\text{and}~\theta(1)^2~\big|~ |C:\z{\theta}|.
	\end{equation}
	Also, $\varphi(1)^2\mid |P|$ as $P$ being a $p$-group.
	Therefore, by (\ref{k}) and (\ref{l})
	\[
		\chi(1)^2=q\cdot q\theta(1)^2\varphi(1)^2~\big|~q|C:\ker(\theta)||P:\ker(\varphi)|=q|M:\ker(\psi)|=|G:\ker(\psi)|~\big|~|G:\ker(\chi)|.
	\]

	\smallskip

	Consequently, $\irrs{G}=\irrs{\tilde{G}}$ forms a Galois conjugacy class.
\end{proof}



\end{document}